\newcommand{\mh}{\ensuremath{\mathfrak h}}
\newcommand{\ms}{\ensuremath{\mathfrak s}}
\newcommand{\mt}{\ensuremath{\mathfrak t}}
\newcommand{\Detm}{\textup{Det}}
\newcommand{\IDetm}{\textup{Indet}}
\newcommand{\ind}{\operatorname{ind}}
\newcommand{\supp}{\operatorname{supp}}
\newcommand{\coloneqq}{\mathrel{\mathop:}=}
\newcommand{\eqqcolon}{=\mathrel{\mathop:}}
\newtheorem*{theorem*}{Theorem}
\newtheorem{theorem}{Theorem}
\newtheorem{lemma}[theorem]{Lemma}
\newtheorem{corollary}[theorem]{Corollary}
\theoremstyle{remark}
\newtheorem{remark}[theorem]{Remark}
\begin{document}
\author{Alexander Dyachenko}
\email{\href{mailto:diachenko@sfedu.ru}{diachenko@sfedu.ru}}
\email{\href{mailto:dyachenk@math.tu-berlin.de}{dyachenk@math.tu-berlin.de}}
\address{
    TU-Berlin, Institut f\"ur Mathematik, Sekr.~MA 4-2,\\
    Straße des 17. Juni 136, 10623 Berlin, Germany.
}

\title
{Rigidity of the Hamburger and Stieltjes moment sequences}

\date{1st December 2017}

\keywords{Moment problems $\cdot$ Index of determinacy}
\subjclass[2010]{30E05 $\cdot$ 44A60 $\cdot$ 42C05}

\begin{abstract}
    This paper aims at finding conditions on a Hamburger or Stieltjes moment sequence, under
    which the change of at most a finite number of its entries produces another sequence of the
    same type. It turns out that a moment sequence allows all small enough variations of this
    kind precisely when it is indeterminate. We also show that a determinate moment sequence has
    the finite index of determinacy if and only if the corresponding finite number of its
    entries can be changed in a certain way.
\end{abstract}

\maketitle

\section{Brief introduction}
The classical moment problems by Stieltjes and Hamburger play an important role in many branches
of mathematics. They consist in finding a distribution of masses (a positive measure) based on a
sequence of the real numbers which are called moments. Sequences of the moments can be
characterized by positivity of the related Hankel quadratic forms, and the induced
interrelationship between the entries is relatively strong. Indeed, although increasing the
leading (i.e.~zeroth) moment is always possible, other changes of finitely many moments turn to
be impossible for many moment sequences: here we call such sequences ``rigid''. At the same
time, ``nonrigid'' moment sequences may allow more or less free variations of their entries, and
the criterion for this freedom seems to be absent in literature.

We use the so-called \emph{index of determinacy} to express the tightness of the conditions
arising from positivity of the Hankel forms. Our first goal is to describe its connection to the
rigidity of moment sequences. In particular, we show that this index determines the minimal
number of leading moments which can be varied.

The second goal is to find out whether or not these variations can be arbitrary. It turns out
that indeterminate moment sequences survive all small enough changes of finitely many entries.
At the same time, determinate sequences may survive only specific variations:
\emph{one of the moments allows all small changes if and only if the corresponding moment
    problem is indeterminate.}

The author is very grateful to Alan Sokal and Christian Berg for their remarks, as well as for
introducing the problem and the initial idea. This research was supported by the Einstein
Foundation Berlin.

\section{Definitions and basic facts}

For introducing rigorous statements and proofs, we need certain basic facts and definitions,
most of which can be found in the classical books~\cite{Akhiezer,ShohatTamarkin}.

Let~$\mh = (c_i)_{i=0}^\infty=(c_0,c_1,c_2,\dots)$ be a sequence of the Hamburger moments, that
is let there exist some (positive) measure%
\footnote{We introduce a slight abuse in the notation by using the differential~$d\mu(x)$ of a
    non-decreasing function~$\mu(x)$ for denoting the related measure.}%
~$d\mu(x)$ on the real line such that
\begin{equation}\label{eq:1}
    c_i = \int_{-\infty}^\infty x^i \, d\mu(x) \quad
    \text{for all}\quad
    i=0,1,\dots.
\end{equation}
The work~\cite{HamburgerT1} established that~\mh\ is a Hamburger moment sequence if and only if
all quadratic forms
\begin{equation}\label{eq:1qf}
    \sum_{i,j=0}^p c_{i+j}x_ix_j,
    \quad p=0,1,2,\dots
\end{equation}
are positive semidefinite. Given a Hamburger moment sequence~\mh, we write~$\mh\in\Detm_H$ if it
is determinate, i.e. if the measure~$d\mu(x)$ is uniquely determined by~\eqref{eq:1},
or~$\mh\in\IDetm_H$ otherwise.

Note that the trimmed sequence of moments~$(c_i)_{i=2n}^\infty$ corresponds for
each~$n=1,2,\dots$ to the measure~$x^{2n}d\mu(x)$. If~$(c_i)_{i=2n}^\infty$ determines the
unique measure~$x^{2n}d\mu(x)$, then~$d\mu(x)$ is uniquely determined outside the origin, while
the atom at the origin is fixed by the moment~$c_0$. That is, prefixing a moment sequence with a
pair of new entries only means introducing additional constraints, which cannot make the problem
``less definite'':
\begin{lemma}\label{lm:1}
    If~$\mh = (c_i)_{i=0}^\infty\in\IDetm_H$, then~$(c_i)_{i=2n}^\infty\in\IDetm_H$
    for~$n=1,2,\dots$. Accordingly, if~$(c_i)_{i=2n}^\infty\in\Detm_H$ for some positive
    integer~$n$, then~$\mh\in\Detm_H$.
\end{lemma}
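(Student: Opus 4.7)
The two statements are contrapositives of each other (applied across all $n$), so I plan to prove only the first: if $\mh=(c_i)_{i=0}^\infty\in\IDetm_H$, then $(c_i)_{i=2n}^\infty\in\IDetm_H$ for every $n\ge 1$.

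My approach is the one already hinted at in the paragraph preceding the lemma. Assume $\mh$ is indeterminate, so that we can pick two distinct positive measures $d\mu_1(x)$ and $d\mu_2(x)$ both satisfying \eqref{eq:1}. Define $d\nu_k(x)\coloneqq x^{2n}\,d\mu_k(x)$ for $k=1,2$. These are positive measures (the weight $x^{2n}$ is non-negative), and a direct computation
\[
    \int_{-\infty}^{\infty} x^j\,d\nu_k(x)=\int_{-\infty}^{\infty} x^{j+2n}\,d\mu_k(x)=c_{j+2n},\qquad j=0,1,2,\dots,
\]
shows that each $d\nu_k$ is a representing measure for the trimmed sequence $(c_i)_{i=2n}^\infty$. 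To conclude indeterminacy it only remains to check $d\nu_1\ne d\nu_2$.

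This is the one step that needs real justification, and it is where I would focus. Multiplying by $x^{2n}$ annihilates any mass at the origin, so I have to argue that $d\mu_1$ and $d\mu_2$ cannot differ \emph{solely} at $x=0$. For this consider the signed measure $d\sigma\coloneqq d\mu_1-d\mu_2$. All its moments vanish, in particular $\int d\sigma=c_0-c_0=0$. If $d\sigma$ were concentrated at the origin, it would equal $\alpha\delta_0$ for some $\alpha\in\mathbb R$; the vanishing of the total mass would force $\alpha=0$, contradicting $d\mu_1\ne d\mu_2$. Hence $d\mu_1$ and $d\mu_2$ must disagree on some Borel set $E\subset\mathbb{R}\setminus\{0\}$, and on $E$ the factor $x^{2n}$ is strictly positive, so $d\nu_1\ne d\nu_2$ as required.

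The second assertion of the lemma, namely that $(c_i)_{i=2n}^\infty\in\Detm_H$ implies $\mh\in\Detm_H$, follows immediately by taking the contrapositive of what has just been proved. I do not expect any real obstacle beyond the small verification, highlighted above, that two distinct solutions of the Hamburger moment problem cannot be confined to differing only at the origin.
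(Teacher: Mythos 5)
Your proof is correct and is essentially the argument the paper itself gives in the paragraph preceding Lemma~\ref{lm:1}, just stated in the contrapositive direction: the key point in both is that a representing measure of $(c_i)_{i=2n}^\infty$ determines $d\mu(x)$ off the origin (since the weight $x^{2n}$ is strictly positive there, one can recover $d\mu$ by multiplying by $x^{-2n}$), while the atom at the origin is pinned down by the total mass $c_0$, so two distinct solutions of~\eqref{eq:1} cannot differ only at $x=0$ and hence yield distinct solutions for the trimmed sequence. Your final inference would be slightly cleaner if phrased as ``$d\nu_1=d\nu_2$ would force $d\mu_1=d\mu_2$ on $\mathbb{R}\setminus\{0\}$ via $d\mu_k=x^{-2n}\,d\nu_k$ there,'' rather than comparing the two measures on the single set $E$, but this is a cosmetic point.
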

This lemma allows introducing the \emph{index of determinacy} (see~\cite{BergDuran})
\[
    \ind_0(\mh) \coloneqq\sup\big\{n\in\mathbb{Z}_{\ge0}:
      (c_i)_{i=2n}^\infty\in\Detm_H
      \big\}
\]
of a determinate Hamburger sequence~\mh. In other words,~$\ind_0(\mh)$ is the minimal
nonnegative number such that the measure~$x^{2n}d\mu(x)$ with an integer~$n$ is uniquely
determined by its moments as soon as~$0\le n\le\ind_0(\mh)$ and not uniquely determined for
all~$n>\ind_0(\mh)$. The work~\cite{BergDuran} introduces the index of determinacy through the
measure, which allows calculating it with respect to other points than the origin. Both
definitions coincide for the origin, and ours better fits to the current study. It turns out
that, if~$\ind_0\mh<\infty$, then the corresponding measure~$\mu(x)$ is discrete. Another result
of~\cite{BergDuran} is that the indices for distinct points can differ by at most~$1$.

Each distribution of masses~$d\mu(z)$ satisfying~\eqref{eq:1} determines a unique%
\footnote{The uniqueness follows from the Stieltjes-Perron inversion
    formula~\cite[pp.~123--126]{Akhiezer}} %
mapping of the upper half-plane~$\{z\in\mathbb C:\Im z >0\}$ into itself by the
formula~\cite[p.~95, Thm.~3.2.1]{Akhiezer}
\[
\int_{-\infty}^\infty \frac{d\mu(x)}{x-z},
\]
where the integral has the asymptotic expansion
\begin{equation}\label{eq:asymptotic}
    -\frac{c_{0}}{z^{1}}-\frac{c_{1}}{z^{2}}-\frac{c_{2}}{z^{3}}-\cdots
    \quad\text{as}\quad z\to +\infty\cdot i.
\end{equation}
This fact yields a characterization of determinate Hamburger moment sequences: the related
asymptotic series~\eqref{eq:asymptotic} represents only one mapping of the upper half-plane into
itself.

\begin{remark}\label{rem:defin-basic-facts}
    As follows from Kronecker's studies, singularity of any of the positive semidefinite
    forms~\eqref{eq:1qf} is equivalent to that the support of the corresponding
    measure~$d\mu(x)$ has finitely many points. Supports of such measures are compact, and thus
    the related index of determinacy is always infinite: the series~\eqref{eq:asymptotic} turn
    to the Taylor series at infinity. Theorem~\ref{th:2} and Corollary~\ref{cr:1} in
    Section~\ref{sec:rigid-nonr-moment} show that the moment sequences of such measures are
    rigid. Sections~\ref{sec:results-strong-vers}--\ref{sec:proofs2} only deal with nonrigid
    sequences, so the corresponding forms~\eqref{eq:1qf} are necessarily positive definite
    therein.
\end{remark}

The sequence~$\ms = (s_i)_{i=0}^\infty$ is called the Stieltjes moment sequence provided that
there exists some measure~$d\sigma(x)$ on the positive semi-axis such that
\begin{equation}\label{eq:1s}
    s_i = \int_{0}^\infty x^i \, d\sigma(x) \quad
    \text{for all}\quad
    i=0,1,\dots.
\end{equation}
The set of Stieltjes moment sequences is hence embedded in the set of Hamburger moment
sequences. Moreover, the sequence~\ms\ is a Stieltjes moment sequence if and only if both
sequences~$\ms$ and~$(s_i)_{i=1}^\infty$ are Hamburger moment sequences. We
write~$\ms\in\Detm_S$ if~$\ms$ is determinate, i.e. if the measure~$d\sigma(x)$ is uniquely
determined by~\eqref{eq:1s}, or~$\ms\in\IDetm_S$ otherwise. Although~$\IDetm_S$ contains only
those Stieltjes sequences that belong to~$\IDetm_H$, Hamburger proved~\cite{HamburgerT2}
that~$\Detm_S\cap\IDetm_H$ is non-empty and gave a description of this set. Moreover, the
inclusion~$\ms\in\Detm_S\cap\IDetm_H$ implies that the corresponding unique Stieltjes measure
has an atom at the origin, see~\cite{BergThill}. By analogy with Lemma~\ref{lm:1} we have:
\begin{lemma}\label{lm:1s}
    If~$\ms\in\IDetm_S$, then~$(s_i)_{i=n}^\infty\in\IDetm_S\subset\IDetm_H$ for~$n=1,2,\dots$.
    If~$(s_i)_{i=n}^\infty\in\Detm_S$ for some positive integer~$n$, then~$\ms\in\Detm_S$.
\end{lemma}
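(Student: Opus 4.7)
My plan is to mirror the reasoning that justifies Lemma~\ref{lm:1}, but accounting for the fact that in the Stieltjes setting we shift by one index at a time rather than two, and that the support lies in $[0,\infty)$ rather than $\mathbb{R}$. Throughout, I will use the observation that if $d\sigma(x)$ is a Stieltjes representing measure for $\ms$, then $x^{n}\,d\sigma(x)$ is a measure supported on $[0,\infty)$ whose moments are exactly $(s_i)_{i=n}^\infty$, so the shifted sequence is automatically a Stieltjes moment sequence.

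For the first assertion, I would take $\ms\in\IDetm_S$ and pick two distinct Stieltjes representing measures $d\sigma_1$ and $d\sigma_2$. The measures $x^{n}\,d\sigma_1$ and $x^{n}\,d\sigma_2$ then represent $(s_i)_{i=n}^\infty$, and the main point is to verify they are distinct. Multiplication by $x^n$ only annihilates mass at the origin, so $x^{n}\,d\sigma_1=x^{n}\,d\sigma_2$ would force $\sigma_1=\sigma_2$ on $(0,\infty)$; but then the equality $s_0=\sigma_1([0,\infty))=\sigma_2([0,\infty))$ pins down $\sigma_1(\{0\})=\sigma_2(\{0\})$, contradicting $\sigma_1\ne\sigma_2$. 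Hence $(s_i)_{i=n}^\infty\in\IDetm_S$. To conclude the inclusion $\IDetm_S\subset\IDetm_H$, I would simply note that two distinct Stieltjes measures on $[0,\infty)$, extended by zero, are two distinct Hamburger measures on $\mathbb{R}$ with the same moments.

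The second assertion I would dispatch as the contrapositive of the first: if $\ms\in\IDetm_S$, the argument above already shows $(s_i)_{i=n}^\infty\in\IDetm_S$, so conversely $(s_i)_{i=n}^\infty\in\Detm_S$ forces $\ms\in\Detm_S$.

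The only real obstacle is the subtlety about the atom at the origin: passing from $d\sigma$ to $x^{n}\,d\sigma$ is not injective on measures supported on $[0,\infty)$, so determinacy of the shifted problem does not, a priori, see the mass at $0$. This is precisely why the conservation of total mass through $s_0$ must be invoked, and it is the only non-cosmetic difference from the Hamburger version.
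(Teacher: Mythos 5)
Your argument is correct and is essentially the paper's own reasoning (the paper only sketches it by analogy with Lemma~\ref{lm:1}): the whole content is that $d\sigma\mapsto x^{n}d\sigma$ loses only the atom at the origin, which is recovered from $s_0$, so trimming cannot create determinacy. You merely phrase it contrapositively, producing two distinct measures $x^{n}d\sigma_1\ne x^{n}d\sigma_2$ for the trimmed sequence instead of deducing determinacy of $\ms$ from determinacy of $(s_i)_{i=n}^\infty$, which is the same proof.
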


For determinate Stieltjes moment sequences, the \emph{index of determinacy} is defined
(see~\cite{BergThill}) by
\[
    \ind(\ms) \coloneqq\sup\big\{n\in\mathbb{Z}_{\ge0}:
    (s_i)_{i=n}^\infty\in\Detm_S\big\}
    .
\]
Accordingly, the measure~$x^{n} d\sigma(x)$ is the only one corresponding
to~$(s_i)_{i=n}^\infty$ when~$n\le\ind(\ms)$, and there are infinitely many measures corresponding
to$~(s_i)_{i=n}^\infty$ when~$n>\ind(\ms)$.

The Hamburger moment sequence~\mh\ is called symmetric if all odd moments~$c_{2n+1}$,
~$n\in\mathbb{Z}_{\ge0}$, are equal to zero. The corresponding measure~$d\mu(x)$ is not
necessarily symmetric (i.e. even), but the odd part of its distribution
function~$\frac12 (\mu(x)-\mu(-x))$ gives a symmetric measure corresponding to the symmetric
moment sequence~\mh. In particular, the measure~$d\mu(x)$ is necessarily symmetric
if~$\mh\in\Detm_H$, and there are infinitely many symmetric solutions to the moment
problem~\eqref{eq:1} if~$\mh\in\IDetm_H$. Each Stieltjes moment
sequence~$\ms = (s_i)_{i=0}^\infty$ corresponds to a unique symmetric Hamburger moment
sequence~$(s_0,0,s_1,0,s_2,0,\dots)$ and conversely: if~$(s_0,0,s_1,0,s_2,0,\dots)$ is a
symmetric Hamburger moment sequence, then~\ms\ is a Stieltjes moment sequence. Moreover, these
sequences~$\ms$ and~$(s_0,0,s_1,0,s_2,0,\dots)$ are simultaneously determinate,
see~\cite{Chihara}.

\section{%
    \texorpdfstring{``Rigidity'' and ``nonrigidity'' of moment sequences}%
    {“Rigidity” and “nonrigidity” of moment sequences}}
\label{sec:rigid-nonr-moment}
\begin{theorem}\label{th:2}
    Given some positive integer~$n$, let
    \[
        \mh= (c_0,c_1,\dots,c_{2n-1},c_{2n},c_{2n+1},\dots)\quad\text{and}\quad
        \mt\coloneqq (c_0^*,c_1^*,\dots,c_{2n-1}^*,c_{2n},c_{2n+1},\dots)
    \]
    be two Hamburger moment sequences which differ in at most~$2n$ leading entries.
    If~$(c_i)_{i=2n}^\infty\in\Detm_H$, then~$\mh$ and~$\mt$ coincide up to their leading
    entries~$c_0$ and~$c_0^*$.

    In other words, if~$\mh\in\Detm_H$ and~$\ind_0(\mh)\ge n$, then necessarily
    \[
        c_1^*=c_1,\quad c_2^*=c_2,\quad\dots,\quad c_{2n-1}^*=c_{2n-1}.
    \]
\end{theorem}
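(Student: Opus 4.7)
The essence is that the hypotheses force the two representing measures to differ only by an atom at the origin, and such an atom contributes nothing to any moment of positive order.

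First, I would pick measures $d\mu(x)$ and $d\mu^*(x)$ on $\mathbb{R}$ representing $\mh$ and $\mt$ respectively, via the integral formula~\eqref{eq:1}. The key observation is that for every integer $k\ge0$,
\[
    \int_{-\infty}^{\infty} x^k\cdot x^{2n}\,d\mu(x)
    = c_{2n+k}
    = \int_{-\infty}^{\infty} x^k\cdot x^{2n}\,d\mu^*(x),
\]
because by hypothesis $\mh$ and $\mt$ agree from index $2n$ onwards. In other words, the two positive measures $x^{2n}\,d\mu(x)$ and $x^{2n}\,d\mu^*(x)$ both solve the Hamburger moment problem for the sequence $(c_i)_{i=2n}^\infty$.

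Next, I would invoke the determinacy hypothesis: since $(c_i)_{i=2n}^\infty\in\Detm_H$, this solution is unique, so $x^{2n}\,d\mu(x) = x^{2n}\,d\mu^*(x)$ as measures on $\mathbb{R}$. Because the weight $x^{2n}$ is strictly positive off the origin, this identity of weighted measures restricts to the equality $d\mu(x) = d\mu^*(x)$ on every Borel subset of $\mathbb{R}\setminus\{0\}$ (one can see this by integrating any bounded continuous function compactly supported away from $0$ against $x^{-2n}\cdot x^{2n}\,d\mu = x^{-2n}\cdot x^{2n}\,d\mu^*$). Therefore the signed measure $d\mu-d\mu^*$ is supported at the single point $0$, i.e.\ $d\mu-d\mu^* = \alpha\,\delta_0$ for some real $\alpha$.

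Finally, for every $i\ge 1$ one computes
\[
    c_i - c_i^* = \int_{-\infty}^{\infty} x^i\,\bigl(d\mu(x)-d\mu^*(x)\bigr)
    = \alpha\cdot 0^i = 0,
\]
which is exactly the desired conclusion that $c_i^*=c_i$ for $1\le i\le 2n-1$ (and in fact for all $i\ge 1$). The ``in other words'' reformulation is immediate from the definition of $\ind_0(\mh)$, since $\ind_0(\mh)\ge n$ means precisely that $(c_i)_{i=2n}^\infty\in\Detm_H$.

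I do not anticipate a serious obstacle: the only slightly delicate point is the passage from the equality of weighted measures $x^{2n}\,d\mu = x^{2n}\,d\mu^*$ to the equality $d\mu=d\mu^*$ off the origin, which is a standard measure-theoretic manipulation. Everything else is bookkeeping against the definitions reviewed in Section~2.
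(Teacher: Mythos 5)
Your proof is correct and follows essentially the same route as the paper: determinacy of the trimmed sequence forces $x^{2n}\,d\mu = x^{2n}\,d\mu^*$, so the representing measures differ only by an atom at the origin, which affects no moment of positive order. The only (harmless) difference is that you work with arbitrary representing measures and never need Lemma~\ref{lm:1}, whereas the paper first invokes it to fix $d\mu$ and $d\nu$ uniquely; your measure-theoretic justification of the passage from the weighted equality to $d\mu=d\mu^*$ off the origin is fine.
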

\begin{proof}
    If~$(c_i)_{i=2n}^\infty\in\Detm_H$, then Lemma~\ref{lm:1}
    yields~$\mh,\mt\in\Detm_H$; thus, there exist two uniquely determined
    measures~$d\mu(x)$ and~$d\nu(x)$ corresponding to, resp.,~\mh\ and~$\mt$. Moreover,
    due to~$x^{2n}d\mu(x)=x^{2n}d\nu(x)$ these measures can differ only by a concentrated mass
    at the origin: $d\mu(x)=d\nu(x)+M\cdot\delta(x)\,dx$, where~$M\in\mathbb{R}$.
    Therefore,~$c_0 = M + c_0^*$, while the remaining entries in $\mh$ and~$\mt$
    coincide.
\end{proof}

Considering symmetric Hamburger moment sequences immediately gives:

\begin{corollary}\label{cr:1}
    Let~$n$ be some positive integer, let also
    \[
        \ms=(s_0,s_1,\dots,s_{n-1},s_{n},s_{n+1},\dots)
        \quad\text{and}\quad
        \mt\coloneqq (s_0^*,s_1^*,\dots,s_{n-1}^*,s_{n},s_{n+1},\dots)
    \]
    be two
    Stieltjes moment sequences which differ in at most~$n$ leading entries.
    If~$(s_i)_{i=n}^\infty\in\Detm_S$, then~$\ms$ and~$\mt$ coincide up to their leading
    entries~$s_0$ and~$s_0^*$.

    In other words, if~$\ms\in\Detm_S$ and~$\ind(\ms)\ge n$, then necessarily
    \[
        s_1^*=s_1,\quad s_2^*=s_2,\quad\dots,\quad s_{n-1}^*=s_{n-1}.
    \]
\end{corollary}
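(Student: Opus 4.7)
The plan is to deduce this corollary from Theorem~\ref{th:2} by interleaving zeros and passing to the associated symmetric Hamburger moment sequences, using the dictionary recalled at the end of Section~2.

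First, to each of the two Stieltjes sequences I attach its symmetric Hamburger counterpart
\[
    \mh \coloneqq (s_0,0,s_1,0,s_2,0,\dots)
    \quad\text{and}\quad
    \mh^* \coloneqq (s_0^*,0,s_1^*,0,s_2^*,0,\dots),
\]
so that $c_{2i}=s_i$, $c_{2i+1}=0$, and similarly for $\mh^*$. Since $\ms$ and $\mt$ differ in at most their first $n$ entries, the sequences $\mh$ and $\mh^*$ differ in at most their first $2n$ entries (the differences live only on the even positions $0,2,\dots,2(n-1)$, but certainly sit within the first $2n$ slots). Both are Hamburger moment sequences by the correspondence.

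Next I translate the hypothesis $(s_i)_{i=n}^\infty\in\Detm_S$ into a statement about $\mh$. The shifted Stieltjes sequence $(s_i)_{i=n}^\infty$ corresponds to the symmetric Hamburger sequence $(s_n,0,s_{n+1},0,\dots)$, which is exactly the tail $(c_i)_{i=2n}^\infty$ of $\mh$. By the joint determinacy statement recalled at the end of Section~2, determinacy of the shifted Stieltjes sequence is equivalent to determinacy of its symmetric Hamburger twin, so $(c_i)_{i=2n}^\infty\in\Detm_H$.

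Now Theorem~\ref{th:2} applies to the pair $\mh$, $\mh^*$ and yields $c_j^*=c_j$ for every $j=1,2,\dots,2n-1$. Reading this off the even positions gives $s_i^*=s_i$ for $i=1,2,\dots,n-1$, which is the desired conclusion. The only step that needs a moment of care is checking that the determinacy of $(s_i)_{i=n}^\infty$ as a Stieltjes sequence really does pass to the Hamburger determinacy of its symmetrisation; once that correspondence is invoked, the result follows mechanically from Theorem~\ref{th:2}.
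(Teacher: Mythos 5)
Your argument is correct and is precisely the paper's intended proof: the paper derives Corollary~\ref{cr:1} from Theorem~\ref{th:2} by ``considering symmetric Hamburger moment sequences'', i.e.\ exactly the symmetrisation $(s_0,0,s_1,0,\dots)$ together with Chihara's simultaneous-determinacy statement recalled at the end of Section~2, which justifies the one step you flagged. Nothing further is needed.
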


Theorem~\ref{th:2} shows that the Hamburger moment sequences with infinite index of determinacy
are rigid. The next theorem gives the contrary for indeterminate sequences and for determinate
sequences with a finite index of determinacy.

\begin{theorem}\label{th:3}
    Suppose that~$\mh=(c_i)_{i=0}^\infty\in\IDetm_H$ or~$\ind_0(\mh)< n$ for some positive
    integer~$n$. Then, for arbitrary real numbers~$c_{1}^*$, $c_{3}^*$,\dots, $c_{2n-1}^*$ and
    for any~$c_{2n}^*$ such that~$|c_{2n}^*-c_{2n}|<\varepsilon$ with a small
    enough~$\varepsilon$, there exists another Hamburger moment sequence
    \[
    \mt\coloneqq (c_0^*,c_1^*,\dots,c_{2n-1}^*,c_{2n}^*,c_{2n+1},\dots)\in\IDetm_H,
    \]
    where each of the even moments~$c_{2i}^*$, ~$i=0,1,\dots,n-1$, can be chosen in certain
    limits. More specifically,~$c_{2i}^*\in (b_{2i},+\infty)$, where~$b_{2i}>0$ depends on the
    values of~$c_{2i+1}^*,c_{2i+2}^*,\dots,c_{2n-1}^*,c_{2n},c_{2n+1},\dots$.
\end{theorem}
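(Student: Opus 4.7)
The plan is to build the representing measure of $\mt$ by inductively prepending pairs of moments, each time dividing the current representing measure by $x^{2}$ and restoring the total mass with an atom at the origin. First, both cases of the hypothesis together with Lemma~\ref{lm:1} yield $(c_{i})_{i=2n}^{\infty}\in\IDetm_{H}$. A preliminary argument will show that for $\varepsilon$ sufficiently small the perturbed tail $(c_{2n}^{*},c_{2n+1},\ldots)$ remains indeterminate: the Nevanlinna parametrisation of the indeterminate shifted problem yields representing measures with an atom at~$0$ of arbitrary size in some interval $[0,m_{0}]$, and modifying that atom realises the perturbation of~$c_{2n}$ while preserving the full infinite-dimensional family of solutions.

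The heart of the proof is the following \emph{pair-prepending lemma}: if $(a_{0},a_{1},\ldots)\in\IDetm_{H}$ and $\alpha\in\mathbb R$ is arbitrary, then there exists $b>0$ such that $(\beta,\alpha,a_{0},a_{1},\ldots)\in\IDetm_{H}$ for every $\beta>b$. Granted this lemma, Theorem~\ref{th:3} follows by $n$-fold iteration: at step $k=1,\ldots,n$ one prepends the pair $(c_{2(n-k)}^{*},c_{2(n-k)+1}^{*})$, taking $c_{2(n-k)+1}^{*}\in\mathbb R$ arbitrary and $c_{2(n-k)}^{*}$ above the threshold $b_{2(n-k)}$ delivered by the lemma applied to the current indeterminate tail. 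Because each application consumes only the tail available at that step, the bound $b_{2i}$ depends exactly on the already-fixed moments $c_{2i+1}^{*},c_{2i+2}^{*},\ldots,c_{2n-1}^{*},c_{2n}^{*},c_{2n+1},\ldots$, as the statement requires.

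For the pair-prepending lemma I would construct $\nu$ explicitly. Choose a representing measure $\rho$ of $(a_{0},a_{1},\ldots)$ with no atom at $0$ satisfying $\int x^{-1}\,d\rho=\alpha$ and $\int x^{-2}\,d\rho=:b<\infty$, and set
\[
    d\nu(x)\coloneqq(\beta-b)\,\delta(x)\,dx+\frac{d\rho(x)}{x^{2}},
\]
where the second term is understood as a positive measure on $\mathbb R\setminus\{0\}$. A direct computation yields $\int x^{k}\,d\nu=a_{k-2}$ for $k\ge 2$, $\int x\,d\nu=\alpha$, and $\int d\nu=\beta$, so $\nu$ represents $(\beta,\alpha,a_{0},a_{1},\ldots)$ whenever $\beta\ge b$, with $b>0$ by strict positivity of~$1/x^{2}$. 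Indeterminacy of the prepended sequence follows by varying $\rho$ within the representing measures of $(a_{0},a_{1},\ldots)$ satisfying $\int x^{-1}d\rho=\alpha$ and $\int x^{-2}d\rho\le\beta$, each yielding a distinct~$\nu$ with the same moment sequence; the Nevanlinna parametrisation being infinite-dimensional, these two constraints still leave infinitely many such~$\rho$'s.

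The main obstacle is producing $\rho$ with the required negative moments. I would handle this using the N-extremal solutions $\{\rho_{t}\}_{t\in\mathbb R}$ of the indeterminate problem $(a_{0},a_{1},\ldots)$: each $\rho_{t}$ is a discrete measure supported on the real zero set of an explicit entire function depending on~$t$. For all but one exceptional value of~$t$ this zero set avoids~$0$, so $\rho_{t}$ is supported away from the origin and all its negative moments are absolutely convergent sums. The value $\int x^{-1}\,d\rho_{t}=N_{\rho_{t}}(0)$ is a non-constant real M\"obius function of~$t$, whose range is therefore all of $\mathbb R$ (allowing $t=\infty$ if necessary), so the equation $N_{\rho_{t}}(0)=\alpha$ is always solvable. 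This is the step that relies most substantively on the structure theory of indeterminate Hamburger problems and will need the most careful justification in the full proof.
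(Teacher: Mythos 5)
Your strategy is the one the paper itself follows: first make the perturbed tail $(c_{2n}^*,c_{2n+1},\dots)$ indeterminate by adjusting an atom at the origin of a suitable solution, then iterate a pair-prepending lemma $n$ times. Your pair-prepending lemma is exactly the paper's Lemma~\ref{lm:2}, and your construction --- divide a solution supported away from the origin by $x^{2}$ and restore the zeroth moment with an atom $(\beta-b)\delta(x)\,dx$ --- is the same device used in its proof, with the N-extremal measures supplying the measure $\rho$ with $\int x^{-1}d\rho=\alpha$ (in the paper's normalisation \eqref{eq:abcd_origin}--\eqref{eq:c_m_t} one simply takes the parameter $t=-\alpha$).

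The genuine gap is the indeterminacy of the prepended sequence. You justify it by saying that the two constraints $\int x^{-1}d\rho=\alpha$ and $\int x^{-2}d\rho\le\beta$ ``still leave infinitely many $\rho$'s'' because the Nevanlinna parametrisation is infinite-dimensional. That inference does not work as stated: within the N-extremal family --- the only family your argument actually produces --- the map $t\mapsto\int x^{-1}d\mu(x;t)=-t$ is injective, so the first constraint singles out exactly \emph{one} N-extremal measure; and a generic non-N-extremal solution has the origin in its support, so $\int x^{-2}d\rho=\infty$ and the second constraint is far from being a mild ``codimension-one'' condition. What is needed, and what the paper supplies in the proof of Lemma~\ref{lm:2}, is an explicit second representing measure: take finite $t_{1}\ne t_{2}$ with $-t_{1}<\alpha<-t_{2}$ and a convex combination $\rho'=\lambda\mu(\,\cdot\,;t_{1})+(1-\lambda)\mu(\,\cdot\,;t_{2})$ with $\lambda$ chosen so that $\int x^{-1}d\rho'=\alpha$; then $\int x^{-2}d\rho'$ is the chord value of the parabola $t\mapsto c_{-2}(t)$ of \eqref{eq:c_m2_t}, which is finite and can be brought below any $\beta>b$ by letting $t_{1},t_{2}\to-\alpha$. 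Dividing $\rho'$ by $x^{2}$ and padding with the atom $\bigl(\beta-\int x^{-2}d\rho'\bigr)\delta(x)\,dx$ gives a representing measure of $(\beta,\alpha,a_{0},a_{1},\dots)$ distinct from your $\nu$ (the paper even arranges two such measures with disjoint supports). With this insertion your proof closes; note also that choosing $\rho$ to be the N-extremal solution with $\int x^{-1}d\rho=\alpha$ makes your threshold $b$ coincide with the sharp bound \eqref{eq:4}. Finally, the step you flagged as the most delicate --- finding $\rho$ with convergent negative moments and prescribed $\int x^{-1}d\rho$ --- is in fact the easy part and is handled correctly; the same remark about needing \emph{two} distinct solutions applies, more mildly, to your treatment of the tail perturbation of $c_{2n}$, where two solutions each carrying a sufficiently large atom at the origin (e.g.\ convex combinations involving the measure of maximal mass at $0$) settle the matter.
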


See the proofs in Section~\ref{sec:proofs1}. Observe that, like indeterminate Hamburger moment
sequences, the determinate sequences with~${\relpenalty=10000 \ind_0(\mh)=0}$ allow changing any
finite number of leading entries. The correspondence between symmetric and Stieltjes moment
sequences induces the following fact:

\begin{corollary}\label{cr:2}
    Let~$\ms=(s_i)_{i=0}^\infty\in\IDetm_S$ or~$\ind(\ms)\le n$. Then there is the Stieltjes
    moment sequence
    \[
    \mt\coloneqq (s_0^*,s_1^*,\dots,s_{n-1}^*,s_{n},s_{n+1},\dots)\in\IDetm_S,
    \]
    such that the moments~$s_{i}^*$, ~$i=0,1,\dots,n-1$, can be set to any numbers within
    certain limits. More specifically,~$s_i^*\in (b_{i},+\infty)$, where~$b_{i}>0$ depends on
    the values of~$s_{i+1}^*,s_{i+2}^*,\dots,s_{n-1}^*,s_n,s_{n+1},\dots$.
\end{corollary}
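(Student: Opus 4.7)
The plan is to deduce this Stieltjes statement from Theorem~\ref{th:3} by exploiting the correspondence between Stieltjes sequences and symmetric Hamburger sequences recalled at the end of Section~2. To the given $\ms=(s_i)_{i=0}^\infty$ I associate its symmetric Hamburger companion $\mh\coloneqq(s_0,0,s_1,0,s_2,0,\dots)$, i.e., $c_{2i}=s_i$ and $c_{2i+1}=0$. Applying the same construction to each tail $(s_i)_{i=n}^\infty$ and invoking the simultaneous determinacy for paired symmetric/Stieltjes sequences yields $\ind_0(\mh)=\ind(\ms)$, so the hypothesis of the corollary ($\ms\in\IDetm_S$ or $\ind(\ms)<n$) translates exactly into the hypothesis of Theorem~\ref{th:3} for $\mh$ at the same index~$n$.

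Next I invoke Theorem~\ref{th:3} applied to $\mh$ with the symmetry-preserving choices $c_{2i+1}^*=0$ for $i=0,1,\dots,n-1$ (which is permitted, since the theorem allows these odd entries to be \emph{any} real numbers) and $c_{2n}^*=c_{2n}=s_n$ (no perturbation at the pivot index, allowed by the inequality $|c_{2n}^*-c_{2n}|<\varepsilon$). The theorem produces an indeterminate Hamburger moment sequence
\[
    \mt = (c_0^*,\,0,\,c_2^*,\,0,\,\dots,\,0,\,c_{2n-2}^*,\,0,\,s_n,\,0,\,s_{n+1},\,0,\,\dots)\in\IDetm_H,
\]
where each $c_{2i}^*\in(b_{2i},+\infty)$ with $b_{2i}>0$ depending only on the moments that follow it.

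The final step is to observe that $\mt$ is a symmetric Hamburger moment sequence, since its odd entries all vanish by construction. Reversing the symmetric-to-Stieltjes correspondence from Section~2 then shows that $(c_0^*,c_2^*,\dots,c_{2n-2}^*,s_n,s_{n+1},\dots)=(s_0^*,s_1^*,\dots,s_{n-1}^*,s_n,s_{n+1},\dots)$ is itself a Stieltjes moment sequence, and the simultaneous-determinacy statement upgrades $\mt\in\IDetm_H$ to membership in $\IDetm_S$. Setting $b_i\coloneqq b_{2i}$ delivers the intervals $(b_i,+\infty)$ claimed in the corollary. The only delicate point in this strategy is verifying that the symmetry constraint is compatible with Theorem~\ref{th:3}, but this is immediate from the theorem's explicit freedom in the choice of the odd-indexed moments, so I do not anticipate any substantive obstacle beyond this bookkeeping.
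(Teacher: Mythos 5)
Your proposal is precisely the paper's own route: Corollary~\ref{cr:2} is presented there as an immediate consequence of Theorem~\ref{th:3} via the Stieltjes/symmetric-Hamburger correspondence, exactly as you spell out (take $c_{2i+1}^*=0$, keep $c_{2n}^*=c_{2n}=s_n$, use $\ind_0(\mh)=\ind(\ms)$ and the simultaneous determinacy of $\ms$ and its symmetric companion), and each of these steps checks out. One remark: you silently read the hypothesis as ``$\ms\in\IDetm_S$ or $\ind(\ms)<n$'' rather than the printed ``$\ind(\ms)\le n$''; this is in fact the only tenable reading, because if $\ind(\ms)=n$ then $(s_i)_{i=n}^\infty\in\Detm_S$ and Lemma~\ref{lm:1s} forces every sequence of the form $\mt=(s_0^*,\dots,s_{n-1}^*,s_n,s_{n+1},\dots)$ to be determinate, so the boundary case is a defect of the statement (an off-by-one relative to Theorem~\ref{th:3}), not a gap in your argument.
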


\section{A stronger version of the problem}
\label{sec:results-strong-vers}
The stronger version of the initial problem is to determine under which conditions ``nonrigid''
sequences allow arbitrary perturbations of the entries provided that the perturbations are small
enough. Here we introduce a few extensions in this direction of the facts stated in the previous
section.

It follows directly from the definition~\eqref{eq:1} that, if~\mh\ and~$\mt$ are two Hamburger
sequences corresponding, respectively, to the measures~$d\mu(x)$ and~$d\nu(x)$, then their
convex combination~$\eta\mh+(1-\eta)\mt$, where~$0<\eta<1$, is the Hamburger sequence which
corresponds to the measure~$\eta d\mu(x)+(1-\eta)d\nu(x)$. The simplest example that a sum of
two determinate sequences may be indeterminate can be found in the proof of Lemma~\ref{lm:2},
where one of the sequences is induced by the Dirac measure~$\delta(x)\,dx$, see
Section~\ref{sec:proofs1}. However, if at least one of the sequences~\mh\ and~$\mt$ is
indeterminate, then their convex combination~$\eta\mh+(1-\eta)\mt$ must also be indeterminate:
at least one of the summands in~$\eta d\mu(x)+(1-\eta)d\nu(x)$ is not determined uniquely.
Trimming leading entries of determinate sequences with finite indices immediately yields:
\begin{lemma}\label{lm:3}
    Let~\mh\ and~$\mt$ be two Hamburger moment sequences and let~$0<\eta<1$. Then
    \[
        \ind_0\left(\eta\mh+(1-\eta)\mt\right) \le \min\big\{\ind_0(\mh),\ind_0(\mt)\big\}
        .
    \]
\end{lemma}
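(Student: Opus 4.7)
The plan is to leverage the preceding observation, noted in the paragraph immediately above the lemma, that a convex combination of two Hamburger moment sequences is indeterminate whenever at least one summand is indeterminate. Setting $m \coloneqq \min\{\ind_0(\mh), \ind_0(\mt)\}$, the inequality holds trivially if $m=\infty$, so I would suppose $m<\infty$ and, without loss of generality, $\ind_0(\mh) = m$.

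By the definition of the index of determinacy, the trimmed sequence obtained from $\mh$ by deleting its first $2(m+1)$ entries lies in $\IDetm_H$. If $d\mu(x)$ and $d\nu(x)$ denote measures corresponding to $\mh$ and $\mt$, then this trimmed sequence is the Hamburger moment sequence of $x^{2(m+1)}d\mu(x)$, while the analogous trimming of $\mt$ corresponds to $x^{2(m+1)}d\nu(x)$.

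The analogous trimming of $\eta\mh + (1-\eta)\mt$ is then precisely the convex combination (with the same weights $\eta$ and $1-\eta$) of the two trimmed sequences above, and its associated measure is $\eta\, x^{2(m+1)}d\mu(x) + (1-\eta)\, x^{2(m+1)}d\nu(x)$. Since its first summand is indeterminate, the cited observation forces this convex combination to be indeterminate as well, so $\ind_0\!\left(\eta\mh + (1-\eta)\mt\right) < m+1$, yielding the required bound. The argument is essentially bookkeeping, combining Lemma~\ref{lm:1} (for the passage to trimmed sequences) with the already-noted fact about convex combinations containing an indeterminate summand; I do not anticipate any step presenting a genuine obstacle.
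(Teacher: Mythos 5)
Your proposal is correct and follows essentially the same route as the paper: the paper derives Lemma~\ref{lm:3} directly from the observation, stated immediately before it, that a convex combination $\eta\,d\mu(x)+(1-\eta)\,d\nu(x)$ is not uniquely determined once one summand is indeterminate, combined with trimming the first $2(m+1)$ entries exactly as you do. Your write-up merely makes explicit the bookkeeping that the paper compresses into ``trimming leading entries \dots\ immediately yields''.
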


Let the moment sequences~$\mh$ and~$\mt$ be as in Theorem~\ref{th:3} such
that~$\mt$ is indeterminate, then the
sequence~$(1-\varepsilon)\mh+\varepsilon\mt$ is also indeterminate for
each~$\varepsilon\in (0,1)$. At the same time, this latter sequence can be as close to~$\mh$ as
we want; therefore:
\begin{corollary}\label{cr:smooth_var}
    Every neighbourhood%
    \footnote{The neighbourhood is in the sense of certain norm, e.g. one of the norms of the
        spaces~$l_p$, ~$1\le p\le \infty$.} of each determinate Hamburger or Stieltjes moment
    sequence of a finite index contains an indeterminate moment sequence of the same type, which
    only differs in a finite number of entries.
\end{corollary}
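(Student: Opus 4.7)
The argument combines two ingredients already established in the excerpt: (i) Theorem~\ref{th:3} (resp.\ Corollary~\ref{cr:2}) produces \emph{some} indeterminate perturbation of the given sequence that differs from it in only finitely many leading entries, and (ii) the remark preceding Lemma~\ref{lm:3} shows that a convex combination of two moment sequences is indeterminate whenever at least one summand is. Averaging these ingredients yields an indeterminate sequence arbitrarily close to the given determinate one.

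Concretely, let $\mh\in\Detm_H$ with $k\coloneqq\ind_0(\mh)<\infty$, and set $n=k+1$ so that $\ind_0(\mh)<n$. Theorem~\ref{th:3} then delivers some
\[
    \mt = (c_0^*,c_1^*,\dots,c_{2n-1}^*,c_{2n},c_{2n+1},\dots)\in\IDetm_H
\]
which agrees with $\mh$ from index $2n$ onwards. The actual values of the first $2n$ entries of $\mt$ are irrelevant here: the even ones may be forced to lie above thresholds $b_{2i}$ and hence need not be close to $c_{2i}$. Only the \emph{existence} of $\mt$ is used.

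For $\varepsilon\in(0,1)$, put $\mt_\varepsilon\coloneqq(1-\varepsilon)\mh+\varepsilon\mt$. If $d\mu$ represents $\mh$ and $d\nu_1,d\nu_2$ are two distinct measures representing $\mt$, then $(1-\varepsilon)d\mu+\varepsilon\,d\nu_j$, $j=1,2$, are two distinct representing measures of $\mt_\varepsilon$; hence $\mt_\varepsilon\in\IDetm_H$. By construction $\mt_\varepsilon$ agrees with $\mh$ from index $2n$ on, while its first $2n$ entries differ from those of $\mh$ by $\varepsilon(c_i^*-c_i)$. Sending $\varepsilon\to 0^+$ makes $\mt_\varepsilon$ arbitrarily close to $\mh$ in any of the $l_p$ norms (only finitely many coordinates change).

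The Stieltjes case is handled identically, using Corollary~\ref{cr:2} in place of Theorem~\ref{th:3} and noting that convex combinations of Stieltjes measures are Stieltjes measures, so the two-representing-measures argument transfers verbatim (taking $n=k$ with $k=\ind(\ms)$ suffices). I foresee no real obstacle: this corollary is essentially a packaging statement, with all the substantive work already contained in Theorem~\ref{th:3}, Corollary~\ref{cr:2}, and the preceding discussion of convex combinations of moment sequences.
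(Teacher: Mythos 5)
Your proof is correct and follows essentially the same route as the paper: the text immediately preceding the corollary takes the indeterminate sequence $\mt$ supplied by Theorem~\ref{th:3} (resp.\ Corollary~\ref{cr:2}), forms the convex combination $(1-\varepsilon)\mh+\varepsilon\mt$, which is indeterminate because an indeterminate summand contributes distinct representing measures, and lets $\varepsilon\to 0^+$ to get arbitrarily close perturbations differing in only finitely many entries.
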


The indeterminate sequences allow changing their entries quite freely:
\begin{theorem}\label{th:strong}
    If a Stieltjes or Hamburger moment sequence is indeterminate, then any small enough
    variation of a finite number of its entries gives an indeterminate moment sequence of the
    same type.
\end{theorem}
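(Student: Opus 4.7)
The plan is to deduce Theorem~\ref{th:strong} rather directly from Theorem~\ref{th:3} (Hamburger case) and from Corollary~\ref{cr:2} (Stieltjes case). Suppose first that $\mh\in\IDetm_H$ and that we wish to vary its first $M$ entries. Fix any integer $n$ with $2n+1\ge M$ and apply Theorem~\ref{th:3} to $\mh$ with this $n$. That theorem exhibits a family of indeterminate Hamburger moment sequences $\mt=(c_0^*,c_1^*,\dots,c_{2n}^*,c_{2n+1},c_{2n+2},\dots)$ sharing the tail of $\mh$, parametrised by those tuples $(c_0^*,\dots,c_{2n}^*)$ in which each odd entry $c_{2i+1}^*$ ranges over $\mathbb{R}$, the entry $c_{2n}^*$ lies in the open interval $(c_{2n}-\varepsilon,c_{2n}+\varepsilon)$, and each even entry $c_{2i}^*$ (for $0\le i<n$) satisfies the strict inequality $c_{2i}^*>b_{2i}$, where the lower bound $b_{2i}>0$ is a function of the later perturbed entries $c_{2i+1}^*,\dots,c_{2n-1}^*$ and of the fixed tail $c_{2n},c_{2n+1},\dots$.

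Setting $c_j^*=c_j$ for all $j\le 2n$ recovers $\mh$ itself, so by indeterminacy of $\mh$ the strict inequalities $c_{2i}>b_{2i}$ must hold at the unperturbed configuration with a positive slack. The theorem would then follow at once, provided the admissible region of Theorem~\ref{th:3} is open in $\mathbb{R}^{2n+1}$. What is needed, concretely, is that the bound $b_{2i}$ depends upper semi-continuously on the later perturbed entries feeding into it: granting this, any sufficiently small simultaneous perturbation of $c_0,\dots,c_{2n}$ keeps $(c_0^*,\dots,c_{2n}^*)$ inside the admissible region, and the resulting sequence $\mt$ is therefore indeterminate.

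For the Stieltjes case, the same scheme applies verbatim with Corollary~\ref{cr:2} in place of Theorem~\ref{th:3}; alternatively, one embeds $\ms$ in its associated symmetric Hamburger sequence $(s_0,0,s_1,0,\dots)$ and invokes the Hamburger version, using that this embedding preserves indeterminacy. The main obstacle is therefore the (upper) semi-continuous dependence of $b_{2i}$ on its arguments. I expect this to be readable off the construction used in the proof of Theorem~\ref{th:3} in Section~\ref{sec:proofs1}: such bounds typically arise as suprema of continuous functionals, for instance those enforcing positive semi-definiteness of truncated Hankel forms, and are accordingly semi-continuous in the desired direction. Should the construction not give this cleanly, a fallback is to interpose a convex combination $\mt=\eta\mh'+(1-\eta)\mh$ with $\mh'$ the indeterminate sequence produced by Theorem~\ref{th:3} in a fixed admissible direction, so that $\mt$ inherits indeterminacy from $\mh$ via the convexity argument preceding Lemma~\ref{lm:3}.
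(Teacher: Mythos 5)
Your plan reduces the theorem to the claim that the admissible region supplied by Theorem~\ref{th:3} is \emph{open}, i.e.\ that each bound $b_{2i}$ depends upper semi-continuously on the later perturbed moments. That claim is not proved in your proposal, and it is in fact the entire difficulty of the theorem, not a routine add-on. Worse, your heuristic points the wrong way: by the construction behind Theorem~\ref{th:3}, $b_{2i}$ is the right-hand side of~\eqref{eq:4}, i.e.\ $\sum_{k}\big(c_{2i+1}^*P_k(0)+Q_k(0)\big)^2$, where \emph{all} of the polynomials $P_k,Q_k$ are those of the perturbed tail $(c_{2i+2}^*,\dots,c_{2n}^*,c_{2n+1},\dots)$ and hence every term changes when any of these entries moves. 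This quantity is a supremum of continuous (partial-sum) functionals of the moments, so a priori it is only \emph{lower} semicontinuous --- exactly the direction that does \emph{not} make the set $\{c_{2i}^*>b_{2i}\}$ open. Establishing upper semicontinuity (or continuity) of such series, which are of the same nature as the maximal mass $\rho(0)$, under a perturbation of finitely many leading moments is essentially equivalent to the stability of indeterminacy you are trying to prove, so the argument as written is circular at its key step. (The first part of your argument --- that at the unperturbed configuration the strict inequalities $c_{2i}>b_{2i}$ hold --- is fine, but it needs the ``if and only if'' of Lemma~\ref{lm:2}, not just the statement of Theorem~\ref{th:3}.)

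Your fallback, taking $\eta\mh'+(1-\eta)\mh$ with a fixed indeterminate $\mh'$, only yields indeterminacy of sequences on one segment through $\mh$, not of an arbitrary small perturbation, so by itself it proves Corollary~\ref{cr:smooth_var}-type statements rather than Theorem~\ref{th:strong}. It is, however, close to the paper's actual mechanism, which is what your proposal is missing: the paper argues by induction on the number of varied entries, with tolerances $\varepsilon_0>\varepsilon_1>\dots$ chosen recursively. Given an arbitrary admissible perturbation $(f_0,\dots,f_n,c_{n+1},\dots)$, it uses Theorem~\ref{th:3} only to produce two fixed indeterminate sequences $\mathfrak d,\mathfrak e$ whose $n$-th entries are $c_n+\varepsilon$ and $c_n-\varepsilon$, and then writes the given sequence as an explicit convex combination $\alpha\vartheta\,\mathfrak e+(1-\alpha\vartheta)\,\mathfrak g$ (or with $\mathfrak d$), where the small coefficient $\alpha\vartheta$ is tailored to $f_n$ and $\mathfrak g$ differs from $\mh$ only in the first $n$ entries by less than $\varepsilon_{n-1}$, hence is indeterminate by the induction hypothesis; indeterminacy of the combination then follows from the convexity remark preceding Lemma~\ref{lm:3}. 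This sidesteps any continuity of $b_{2i}$ altogether. To repair your proof you would either have to supply the uniform control needed for upper semicontinuity of the series in~\eqref{eq:4} under finite-moment perturbations, or switch to an inductive convex-combination decomposition of the given perturbed sequence as above.
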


This is however not true for determinate sequences, even if the index of determinacy is finite:

\begin{theorem}\label{th:strong2}
    Let~$\mh=(c_0,c_1,c_2,\dots)$ be a Hamburger or Stieltjes moment sequence, and let~$m>0$ be
    an integer number. Then the sequence~$\mh$ is necessarily indeterminate ($\mh\in\IDetm_H$
    or, resp., $\mh\in\IDetm_S$), if all small enough variations of the entry~$c_{m}$ that
    change no other entries~$c_n$ with~$n\ne m$ result in moment sequences of the same type.
\end{theorem}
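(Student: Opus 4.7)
Plan: The proof goes by contrapositive. Assuming $\mh$ is determinate (Hamburger, or Stieltjes by symmetrisation), I exhibit arbitrarily small $\delta$ for which $\mh(\delta)\coloneqq(c_0,\ldots,c_{m-1},c_m+\delta,c_{m+1},\ldots)$ fails to be a moment sequence of the same type. For the Stieltjes case, varying $s_m$ by $\delta$ corresponds via the correspondence of Section~2 to varying the $(2m)$-th entry of the symmetric Hamburger sequence $(s_0,0,s_1,0,\ldots)$ by $\delta$; since Hamburger determinacy of the latter is equivalent to Stieltjes determinacy of $\ms$, it suffices to treat the Hamburger case.

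For Hamburger, the natural split is on the size of $m$ relative to $\ind_0(\mh)$. In the \emph{rigid range} $m<2\ind_0(\mh)$, which also covers $\ind_0(\mh)=\infty$, choose an integer $n$ with $m<2n\le 2\ind_0(\mh)$, so that $(c_i)_{i=2n}^\infty\in\Detm_H$. Then Theorem~\ref{th:2} applied to $\mh$ and $\mh(\delta)$, which agree from position $2n$ on and differ only at position $m\le 2n-1$, forces $c_m+\delta=c_m$; every nonzero $\delta$ therefore already violates the moment property.

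The delicate case is $N\coloneqq\ind_0(\mh)<\infty$ with $m\ge 2N$. Here I dualise via Hamburger's criterion: $\mh(\delta)$ is a moment sequence if and only if $\int P\,d\mu+\delta\,[x^m]P\ge 0$ for every nonnegative polynomial $P$, where $\mu$ is the unique representing measure of $\mh$ and $[x^m]P$ denotes the coefficient of $x^m$ in $P$. Normalising $[x^m]P=\mp 1$ yields the sharp thresholds
\[
\delta_{\max}^{\pm}\coloneqq\inf\Bigl\{\,\textstyle\int P\,d\mu : P\ge 0,\ [x^m]P=\mp 1\,\Bigr\},
\]
and the claim reduces to showing $\delta_{\max}^{+}=0$ or $\delta_{\max}^{-}=0$; equivalently, that $c_m$ is a boundary point of the interval of $m$-th moments compatible with the other entries $c_i$.

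The main obstacle is this last step. The intended approach is to derive a contradiction from assuming $c_m$ strictly interior: then positive measures $\nu^{\pm}$ exist with $\int x^m d\nu^{\pm}=c_m\pm\alpha$ and all other moments matching those of $\mu$, and determinacy applied to $\tfrac12(\nu^++\nu^-)$ forces $\nu^-=2\mu-\nu^+$, hence $\nu^+\le 2\mu$, so by Radon--Nikodym $\nu^+=(1+g)\mu$ with $\|g\|_\infty\le 1$, $\int x^ig\,d\mu=0$ for $i\ne m$, and $\int x^mg\,d\mu=\alpha\ne 0$. Ruling out such $g$ is the crux: since $N<\infty$ forces $\mu$ to be discrete with infinitely many atoms $\{(x_j,w_j)\}$ by the Berg--Dur\'an result recalled after Lemma~\ref{lm:1}, $g$ becomes a bounded sequence on the atoms satisfying an infinite linear system whose only bounded solution must be shown to be trivial---plausibly via the density of polynomials in $L^2((1+x^2)^{-1}\,d\mu)$ that characterises Hamburger determinacy (M.~Riesz), or via sharp control of the smallest eigenvalue of the Hankel matrices $H_p=(c_{i+j})_{i,j\le p}$ as $p\to\infty$, which would make the positivity of $H_p+\delta E_m^{(p)}$ fail uniformly once $\delta\ne 0$.
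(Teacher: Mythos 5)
Your reduction steps are fine and partly parallel the paper: the Stieltjes case via symmetrisation, and the range $m<2\ind_0(\mh)$ via Theorem~\ref{th:2}, are both correct (the paper does essentially the same inside Lemma~\ref{lm:strong2a}). Your dual reformulation through nonnegative polynomials and the convexity/Radon--Nikodym reduction is also sound: if $c_m$ were interior, determinacy of $\mh$ applied to $\tfrac12(\nu^++\nu^-)$ does force $\nu^\pm=(1\pm g)\,d\mu$ with $|g|\le 1$, $\int x^i g\,d\mu=0$ for $i\ne m$ and $\int x^m g\,d\mu\ne 0$.

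However, the proof has a genuine gap exactly at what you yourself call the crux: you never rule out such a $g$, you only name two candidate techniques (``plausibly via density \dots or via sharp control of the smallest eigenvalue''). This missing step \emph{is} the theorem --- it is where the paper spends all of its effort, through Lemma~\ref{lm:strong2a} (the perturbation preserves the index and forces $\ind_0<\tfrac m2$, allowing a reduction to index zero) and Lemma~\ref{lm:strong2b} (Sylvester's determinant identity and Hadamard's inequality give a polynomial $p_n(\gamma)$ with leading coefficient $\pm1$ and uniformly bounded coefficients; a convergent subsequence yields $p_*\not\equiv0$, while Hamburger's criterion for index zero, expressed through limits of ratios of Hankel minors, forces $p_*\equiv0$ on $(-\varepsilon,\varepsilon)$, a contradiction). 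Note also why the soft route you sketch is unlikely to close the gap as stated: determinacy-type density theorems concern the full polynomial algebra, whereas your $g$ is orthogonal only to the monomials $x^i$ with $i\ne m$; what is needed is a quantitative statement about how well $x^m$ can be approximated by the remaining powers in a $\mu$-weighted norm, and that is precisely the information encoded in the determinant limits of Hamburger's criterion that the paper invokes. Your Hankel-eigenvalue idea points in the right direction, but without the index bookkeeping of Lemma~\ref{lm:strong2a} and the uniform-in-$n$ control of Lemma~\ref{lm:strong2b} it remains a plan, not a proof.
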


Let a determinate moment sequence~$\mh=(c_i)_{i=0}^{\infty}$ have a finite
index~$\ind_0(\mh)\eqqcolon m$. After comparison of the last theorem with Theorem~\ref{th:3} and
Corollary~\ref{cr:smooth_var}, one can ask whether it is possible to change a single moment
of~\mh\ provided that all other entries are fixed. This question lies outside the scope of the
present work, the answer however must depend on the sequence.

To illustrate, let us consider the basic case. By Theorem~\ref{th:3} and
Corollary~\ref{cr:smooth_var}, we may change the entries~$c_0,\dots,c_{2m-1},c_{2m}$ in certain
limits and keep~$c_{2m+1}$, $c_{2m+2},\dots$. At the same time, the entry~$c_{2m}$ can only be
increased, since~$(c_i)_{i=2m}^{\infty}\in\Detm_H$ and the corresponding measure has no atom at
the origin. More specifically, the point~$(c_{2m+1},c_{2m})$ lies on the border of the parabolic
region introduced in Lemma~\ref{lm:2}. That point cannot leave the region (in particular,
$c_{2m}$ cannot decrease), otherwise~\mh\ is no longer a Hamburger moment sequence. Now,
if~$(c_{2m+1},c_{2m})$ corresponds to the minimal value of~$c_{2m}$, then changing~$c_{2m+1}$ is
impossible on condition that all other entries are fixed. When the value of~$c_{2m}$ is not
minimal, the moment~$c_{2m+1}$ can be either increased or decreased depending on which part of
the border of the involved parabolic region the point~$(c_{2m+1},c_{2m})$ belongs to.

\section{Proofs of Theorems~\ref{th:3} and~\ref{th:strong}}\label{sec:proofs1}
We rely on the following fact (see its proof below):
\begin{lemma}\label{lm:2}
    Let~$\mh=(c_0,c_1,\dots)\in\IDetm_H$. Then there exists another Hamburger moment sequence
    \[
        \mt\coloneqq (c_{-2},c_{-1},c_{0},c_{1},\dots)
    \]
    for any given real number~$c_{-1}$. Moreover,~$c_{-2}$ then can be set to any number
    satisfying
    \begin{equation}\label{eq:4}
        c_{-2}\ge
        c_{-1}^2 \sum_{k=0}^\infty P_k^2(0)+ \sum_{k=0}^\infty Q_k^2(0) + 2c_{-1}\sum_{k=0}^\infty P_k(0)Q_k(0),
    \end{equation}
    here $P_k(z)$ is the $k$th orthonormal polynomial related to~\mh\ and $Q_k(z)$ is the
    corresponding polynomial of the second kind. This inequality is strict if and only if the
    new sequence~$\mt$ is indeterminate.
\end{lemma}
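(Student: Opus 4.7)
The plan is to use the Nevanlinna parameterization of representing measures of the indeterminate sequence~$\mh$. First, I would observe that~$\mt$ is a Hamburger moment sequence if and only if there exists a representing measure~$d\mu$ of~$\mh$ with no atom at~$0$ satisfying $\int d\mu(x)/x = c_{-1}$ and $\int d\mu(x)/x^2 \le c_{-2}$: the measure $d\nu := d\mu/x^2 + \bigl(c_{-2} - \int d\mu/x^2\bigr)\delta_0$ then represents~$\mt$, and conversely every representing measure of~$\mt$ arises as $d\mu := x^2\,d\nu$. This reduces the problem to identifying which pairs $(c_{-1},c_{-2})$ are achievable as the first two Taylor coefficients at~$0$ of the Stieltjes transform $F_\mu(z) := \int d\mu(x)/(x-z)$ for some representing measure~$d\mu$ of~$\mh$ analytic at the origin.

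Next I would apply the Nevanlinna parameterization: every $d\mu$ representing~$\mh$ has
\[
F_\mu(z) = -\frac{A(z)\tau(z) - C(z)}{B(z)\tau(z) - D(z)}
\]
for a unique Nevanlinna parameter~$\tau$ (possibly $\equiv\infty$), where $A,B,C,D$ are the standard entire functions built from~$P_k$ and~$Q_k$ and satisfy $A(0)=0$, $B(0)=-1$, $C(0)=1$, $D(0)=0$ together with $A'(0) = \sum Q_k^2(0)$, $B'(0)=C'(0)=\sum P_k(0)Q_k(0)$, $D'(0)=\sum P_k^2(0)$. Substitution gives $c_{-1} = F_\mu(0) = -1/\tau(0)$, forcing $\tau(0) = -1/c_{-1}$ (with $\tau\equiv\infty$ when $c_{-1}=0$, handled by the limiting form $F_\mu = -A/B$). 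A direct quotient-rule computation of~$F_\mu'(0)$ then yields
\[
c_{-2} = \sum_{k=0}^\infty \bigl(c_{-1} P_k(0) + Q_k(0)\bigr)^2 + c_{-1}^2\,\tau'(0),
\]
i.e.\ the right-hand side of~\eqref{eq:4} plus a non-negative correction term.

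To finish, I would invoke the integral representation
\[
\tau(z) = \alpha + \beta z + \int\!\Bigl(\frac{1}{t-z} - \frac{t}{1+t^2}\Bigr)d\sigma(t)
\]
of a Nevanlinna function real at~$0$, which gives $\tau'(0) = \beta + \int d\sigma(t)/t^2 \ge 0$ with equality only when $\beta=0$ and $\sigma\equiv 0$, i.e.\ when~$\tau$ is the real constant~$-1/c_{-1}$. This proves the inequality in~\eqref{eq:4}. For existence whenever $c_{-2}$ meets the bound, I would take $\tau\equiv -1/c_{-1}$: the corresponding $N$-extremal~$d\mu$ has no atom at~$0$ (its atoms are the zeros of $B\tau-D$, and $B(0)\tau - D(0) = -\tau \ne 0$) and saturates the bound, so adding a point mass of weight $c_{-2} - \sum(c_{-1}P_k(0)+Q_k(0))^2$ at the origin to~$d\mu/x^2$ produces the desired~$d\nu$. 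Strict inequality then admits either a non-constant~$\tau$ with $\tau'(0) > 0$ or a non-zero atom at~$0$ in~$d\nu$, yielding at least two distinct representing measures of~$\mt$ and hence indeterminacy; equality forces both~$\tau$ constant and the atom at~$0$ to vanish, so~$d\nu$ is unique and~$\mt$ is determinate.

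The main technical obstacle will be verifying the precise signs and normalisations of the Nevanlinna parameterization at~$z=0$ (in particular, recognising $\sum(c_{-1}P_k(0)+Q_k(0))^2$ as the square-expansion of the bound in~\eqref{eq:4}), together with cleanly handling the edge case $c_{-1}=0$; the remaining steps are essentially a Taylor expansion at the origin and one application of Nevanlinna's integral representation.
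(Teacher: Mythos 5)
Your plan is sound, and it reorganises the argument rather than reproducing the paper's. Both proofs rest on the same normalisations of the Nevanlinna functions at the origin ($a(0)=d(0)=0$, $b(0)=-1$, $c(0)=1$, $a'(0)=\sum Q_k^2(0)$, $b'(0)=c'(0)=\sum P_k(0)Q_k(0)$, $d'(0)=\sum P_k^2(0)$), but the paper stays with constant real parameters: it computes $c_{-1}(t)=-t$ and the quadratic $c_{-2}(t)$ for the N\nobreakdash-extremal measures, obtains~\eqref{eq:4} as Parseval's/Bessel's (in)equality for $(x-z)^{-1}$ at $z=0$, gets determinacy in the equality case from a boundary version of M.~Riesz's theorem, and proves indeterminacy under strict inequality by exhibiting two mutually singular solutions (convex combinations of N\nobreakdash-extremal measures, one augmented by a point mass at the origin). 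You instead let the parameter run over all Pick functions $\tau$ and reduce everything to the identity ``$\int x^{-2}d\mu$ equals the right-hand side of~\eqref{eq:4} plus $c_{-1}^{2}\tau'(0)$'' together with $\tau'(0)\ge 0$, with equality exactly for constant $\tau$; this buys a uniform treatment of existence, of the bound, and of the determinacy dichotomy, and the uniqueness argument in the equality case (injectivity of the Nevanlinna parameterization) is arguably cleaner than the paper's extension of Riesz's theorem to a boundary point. Two technical points still have to be nailed down, only one of which you flag. First, for a general representing measure neither $F_\mu$ nor $\tau$ need be analytic at $0$, so $\tau(0)$ and $\tau'(0)$ must be read as non-tangential limits: finiteness of $\int x^{-2}d\mu$ gives $F_\mu(z)=c_{-1}+z\int x^{-2}d\mu+o(z)$ in a Stolz angle, and the Herglotz representation then yields $\tau'(0)=\beta+\int t^{-2}d\sigma(t)\ge 0$ in that angular sense; alternatively, necessity of~\eqref{eq:4} follows in one line from Bessel's inequality for $1/x$ in $L^2(x^2d\nu)$, whose Fourier coefficients are $c_{-1}P_k(0)+Q_k(0)$ --- which is precisely the paper's shortcut. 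Second, when $c_{-1}=0$ the correction $c_{-1}^{2}\tau'(0)$ degenerates, so both the identity and your strict-inequality construction of a second solution must be run with the Pick function $-1/\tau$ rather than $\tau$ (for instance $\tau(z)=-1/(\lambda z)$, $\lambda>0$, gives $\int x^{-2}d\mu=\sum Q_k^2(0)+\lambda$); with these repairs your route goes through.
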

\begin{remark}
    It is noteworthy that the right hand side of the last inequality for~$c_{-2}$ reaches its
    minimum
    \[
    \begin{aligned}
        c_{-2}^* &=\sum_{k=0}^\infty Q_k^2(0) - \rho(0)\left(\sum_{k=0}^\infty
            P_k(0)Q_k(0)\right)^2
        \quad\text{when~$c_{-1}$ turns to}\\
        c_{-1}^* &= -\frac{\sum_{k=0}^\infty P_k(0)Q_k(0)}{\sum_{k=0}^\infty P_k^2(0)}
        =-\rho(0)\sum_{k=0}^\infty P_k(0)Q_k(0),
    \end{aligned}
    \]
    where~$\rho(0)$ stands for the maximal mass that can be put at the origin among all
    distributions corresponding to~\mh\ (the related measure is labelled in the proof below
    by~$d\mu(x;\infty)$).

    In the terms of the expressions~\eqref{eq:wfN} and~\eqref{eq:c_m2_t}, the minimal value
    of~$c_{-2}$ can be expressed as
    \[
    c_{-2}^* = a'(0) - \frac{b'(0)c'(0)}{d'(0)}
    \quad \text{for}\quad
    c_{-1}^*
    =-\frac{b'(0)+c'(0)}{2d'(0)}
    =-\frac{b'(0)}{d'(0)}.
    \]
    Another option is that these formulae can be expressed through the limits of certain
    determinants built of the moments, see e.g.~\cite[p.~181]{HamburgerT3}
    or~\cite[pp.~66--67]{ShohatTamarkin}. Limits of similar determinants appear in our
    polynomial~$p_{*}(\gamma)$ from the proof of Lemma~\ref{lm:strong2b} for~$m=1$: the
    condition~$p_{*}(\gamma)\ge 0$ is, in fact, the analogue of~\eqref{eq:4}.
\end{remark}

When~$\mh=(c_{i})_{i=0}^\infty\in\IDetm_H$, there exists the corresponding Nevanlinna
parameterization (see e.g.~\cite[Ch.~II \S 4]{Akhiezer}) --- four real entire
functions~$a(z),b(z),c(z),d(z)$ of at most minimal type of exponential order which satisfy the
formula%
\footnote{Note that the Nevanlinna parameterization in~\cite{Akhiezer} uses the reciprocal of
    our parameter~$t$; our version is closer to~\cite{ShohatTamarkin}.}
\begin{equation}
    m(z;t)\coloneqq
    \int_{-\infty}^\infty \frac{d\mu(x;t)}{x-z}
    =-\frac{a(z)-tc(z)}{b(z)-td(z)}
    \qquad
    \left(m(z;\infty)=-\frac{c(z)}{d(z)}\right)\label{eq:wfN}
\end{equation}
with the parameter~$t$ running over~$\mathbb{R}\cup\{\infty\}$. Each of the measures~$d\mu(x;t)$
solves the moment problem~\eqref{eq:1}, that is
\[
c_{i} = \int_{-\infty}^\infty x^i \, d\mu(x;t) \quad
\text{for all}\quad
i=0,1,\dots
\]
independently of~$t\in\mathbb{R}\cup\{\infty\}$. These measures are called \emph{N-extremal},
because the polynomials are dense in the spaces~$L_2\big(\mu(\,\cdot\;;t)\big)$. The whole set
of the solutions of the moment problem~\eqref{eq:1} comes from substituting~$(-t)$
in~\eqref{eq:wfN} by all mappings of the upper half-plane into itself. At the origin, these four
functions satisfy
\begin{equation}\label{eq:abcd_origin}
    b(0)=-1,\quad c(0)=1
    \quad\text{and}\quad
    a(0)=d(0)=0.
\end{equation}

\begin{proof}[Proof of Lemma~\ref{lm:2}]
    Suppose that~$\mh\in\IDetm_H$ and the corresponding N-extremal measure~$d\mu(x;t)$ is as
    in~\eqref{eq:wfN}. We confine ourselves with finite real values of the parameter~$t$. Under
    this condition, the support of~$d\mu(x;t)$ has no points in a neighbourhood of the origin,
    i.e. if~$|t|<t_{\text{max}}$, then~$(-\epsilon,\epsilon)\notin\supp d\mu(x;t)$ for
    some~$\epsilon>0$ depending on~$t_{\text{max}}$. Thus, the expression~$x^{-2}d\mu(x;t)$ also
    determines a measure with the same support and the moments
    \begin{equation}\label{eq:2}
        c_{-2}(t) = \int_{-\infty}^\infty \frac{d\mu(x;t)}{x^2}, \quad
        c_{-1}(t) = \int_{-\infty}^\infty \frac{d\mu(x;t)}{x}, \quad
        c_{0},c_{1},c_{2},\dots
        .
    \end{equation}
    The function~$m(z;t)$ satisfies
    \[
    \begin{aligned}
        m(z;t)
        &= \int_{-\infty}^\infty \frac 1{1-\frac zx}\cdot \frac{d\mu(x;t)}x
        = \int_{-\infty}^\infty \sum_{k=0}^\infty\left(\frac zx\right)^k \!\cdot \frac{d\mu(x;t)}x\\
        &= \int_{-\infty}^\infty \frac{d\mu(x;t)}x
        + z \int_{-\infty}^\infty \frac{d\mu(x;t)}{x^2} + O(z^2)\\
        &= c_{-1}(t)
        + z c_{-2}(t) + O(z^2)
        .
    \end{aligned}
    \]
    The expression on the right-hand side coincides with the Taylor expansion of~$m(z;t)$ near
    the origin. Consequently,
    \[
        c_{-1}(t) = m(0;t)\quad\text{and}\quad
        c_{-2}(t) = \frac {dm}{dz}(0;t).
    \]
    Then the equalities~\eqref{eq:abcd_origin} yield
    \begin{align}\label{eq:c_m_t}
        c_{-1}(t)
        &=-\frac{a(0)-tc(0)}{b(0)-td(0)}
        =-\frac{0-t}{-1-0t} = -t
        \quad\text{and}
      \\
      \label{eq:c_m2_t}
            c_{-2}(t)
            &= -\frac{\big(a'(0)-tc'(0)\big)\big(b(0)-td(0)\big)
              -\big(a(0)-tc(0)\big)\big(b'(0)-td'(0)\big)}{\big(b(0)-td(0)\big)^2}\\
      \nonumber
            &= a'(0) - tc'(0) - t\big(b'(0)-td'(0)\big)
            = t^2 d'(0) + a'(0) - t\big(c'(0)+b'(0)\big).
    \end{align}

    Our consideration almost literally repeats~\cite[p.~181]{HamburgerT3}, we therefore can
    obtain that the moment sequence~\eqref{eq:2} is definite in the same way. Let us however
    relate it to M.~Riesz's Theorem on N\nobreakdash-extremality and completeness of orthogonal
    polynomials, see e.g.~\cite[p.~2796, p.~2801]{BergDuran} or~\cite[p.~55]{Akhiezer}.
    Let~$\big(P_k(z)\big)_{k=0}^\infty$ be the sequence of the orthonormal polynomials induced
    by~$\mh$ or, which is the same, by~$d\mu(x;t)$ for
    any~$t\in(-t_{\text{max}},t_{\text{max}})$; let~$\big(Q_k(z)\big)_{k=0}^\infty$ denote the
    corresponding polynomials of the second kind. Given any non-real number~$z$, Parseval's
    equality for~$\left(x-z\right)^{-1}$ has the form:
    \[
    \frac{m(z;t)-m(\overline z;t)}{z-\overline z}
    = \int_{-\infty}^\infty \frac{d\mu(x;t)}{|x-z|^2}
    =\sum_{k=0}^\infty \big|m(z;t)P_k(z)+ Q_k(z)\big|^2
    ,
    \]
    see~\cite[p.~40]{Akhiezer}. The series on the right hand side converge due to the
    indeterminacy of~\mh. The limit of this equality as~$z\to 0$ exists due
    to~$0\notin\supp d\mu(x;t)$; it is equal to
    \[
    \begin{aligned}
        c_{-2}(t)=\frac{dm}{dz}(0;t)
        &=\sum_{k=0}^\infty \big|m(0;t)P_k(0)+ Q_k(0)\big|^2\\
        &=c_{-1}^2(t) \sum_{k=0}^\infty P_k^2(0)+ \sum_{k=0}^\infty Q_k^2(0)
        + 2c_{-1}(t)\sum_{k=0}^\infty P_k(0)Q_k(0),
    \end{aligned}
    \]
    which is another form of~\eqref{eq:c_m2_t}, cf.~\cite[p.~54]{Akhiezer}. By~\eqref{eq:c_m_t},
    each value of~$c_{-1}(t)$ corresponds to a single N-extremal measure~$d\mu(x;t)$. At the
    same time, the choice of~$c_{-2}(t)$ as in~\eqref{eq:c_m2_t} is equivalent to Parseval's
    equality and, hence, to the N-extremality.%
    \footnote{Under our conditions, Parseval's equality at the origin implies Parseval's
        equality everywhere outside the real line by M.~Riesz's Theorem~\cite[p.~43]{Akhiezer}:
        its proof extends to the origin with minimal changes if~$0\notin\supp d\mu(x;t)$.}
    So, the moment sequence~\eqref{eq:2} determines a unique measure. If a feasible choice
    of~$c_{-1}$ and~$c_{-2}$ does not fix a unique measure, then Parseval's equality turns to
    Bessel's inequality~\eqref{eq:4}.

    More specifically, let~$c_{-1}$ and~$c_{-2}$ be given real numbers. If the equality
    in~\eqref{eq:4} holds, then~$c_{-2}=c_{-2}(-c_{-1})$ by~\eqref{eq:c_m2_t} and, hence, the
    sequence~$\mt$ is determinate. Suppose that~$c_{-2}$ is an arbitrary real number satisfying
    the strict inequality in~\eqref{eq:4}. Thus, there exists~$\varepsilon>0$ such that both
    points~$(c_{-1},c_{-2})$ and~$(c_{-1},c_{-2}-\varepsilon)$ lie strictly inside the parabolic
    region determined by~\eqref{eq:4}. Then the quadratic equation
    \[
    t^2 \sum_{k=0}^\infty P_k^2(0) - 2t\sum_{k=0}^\infty P_k(0)Q_k(0) + \sum_{k=0}^\infty Q_k^2(0)=c_{-2}
    \]
    has two distinct real solutions, say~$t_{1}, t_{2}$. The same is true for the equation
    \[
    t^2 \sum_{k=0}^\infty P_k^2(0) - 2t\sum_{k=0}^\infty P_k(0)Q_k(0) + \sum_{k=0}^\infty Q_k^2(0)=c_{-2}-\varepsilon,
    \]
    whose solutions we denote by~$t_{3}, t_{4}$. Note that the points~$(t_1,c_{-2})$
    and~$(t_2,c_{-2})$, as well as $(t_3,c_{-2}-\varepsilon)$ and~$(t_4,c_{-2}-\varepsilon)$,
    are on the boundary of the above parabolic region. Since this region is convex, there
    exist~$\vartheta,\eta\in(0,1)$ such
    that~$c_{-1}=\vartheta t_{1}+(1-\vartheta)t_2=\eta t_{3}+(1-\eta)t_4$. Then the measure
    \[
    d\nu_1(x)
    \coloneqq\vartheta\frac{d\mu(x;t_1)}{x^2} +(1-\vartheta)\frac{d\mu(x;t_2)}{x^2}
    \]
    has the following moments:
    \[
    \begin{aligned}
        \int_{-\infty}^\infty d\nu_1(x)= \vartheta c_{-2}(t_1)+(1-\vartheta)c_{-2}(t_2)
        =c_{-2}(t_1)&= c_{-2},\\
        \int_{-\infty}^\infty x\,d\nu_1(x) = \theta t_1 + (1-\theta)t_2 &=c_{-1},\\
        \theta c_{k} + (1-\theta)c_{k}&=c_{k}\quad\text{for }k=0,1,2,\dots.
    \end{aligned}
    \]
    The measure
    \[
    d\nu_2(x)
    \coloneqq\eta\frac{d\mu(x;t_3)}{x^2} +(1-\eta)\frac{d\mu(x;t_4)}{x^2} + \varepsilon\delta(x)\,dx
    \]
    in turn has the same sequence of moments:
    \[
    \begin{aligned}
        \eta c_{-2}(t_3)+(1-\eta)c_{-2}(t_4) + \varepsilon = c_{-2}-\varepsilon + \varepsilon&= c_{-2},\\
        \eta t_3 + (1-\eta)t_4 &= c_{-1},\\
        \eta c_{k} + (1-\eta)c_{k} &= c_{k}\quad\text{for }k=0,1,2,\dots.
    \end{aligned}
    \]
    However, the supports of~$d\nu_1(x)$ and~$d\nu_2(x)$ have no common points, see e.g. the
    footnote in~\cite[p.~55]{Akhiezer}. Thus, the moment
    sequence~$\mt = (c_{-2}, c_{-1}, c_{0}, c_{1},\dots)$ is indeterminate. \end{proof}

\begin{proof}[Proof of Theorem~\ref{th:3}]
    Since~$\mh\in\IDetm_H$ or~$\ind_0(\mh)< n$, the sequence~$(c_{i})_{i=2n}^\infty$ is
    indeterminate. Then there is a corresponding N-extremal measure, which has the
    atom~$\varepsilon\delta(x)\,dx$ at the origin. Thus, each
    sequence~$(c_{2n}^*,c_{2n+1},\dots)$ belongs to~$\IDetm_H$ provided that the positive
    number~$c_{2n}^*$ satisfies~$|c_{2n}^*-c_{2n}|<\varepsilon$.

    Lemma~\ref{lm:2} implies that, for an arbitrary real number~$c_{2n-1}^*$, there exists
    some~$b_{2n-2}$ such that
    \[
        (c_{2n-2}^*,c_{2n-1}^*,c_{2n}^*,c_{2n+1},\dots)\in\IDetm_H
        \quad\text{as soon as}\quad
        c_{2n-2}^*>b_{2n-2}
        .
    \]
    Clearly, we can fix~$c_{2n-2}^*$ so that additionally~$c_{2n-2}^*\ne c_{2n-2}$. Analogously,
    if we already have
    \[
        (c_{2n-2k}^*,\dots,c_{2n-1}^*,c_{2n}^*,c_{2n+1},\dots)\in\IDetm_H
    \]
    for some integer~$k$, ~$1\le k < n$, then Lemma~\ref{lm:2} yields that for an arbitrary real
    number~$c_{2n-k-1}^*$ there exists some~$b_{2n-2k-2}$ such that
    \[
        (c_{2n-2k-2}^*,\dots,c_{2n-1}^*,c_{2n}^*,c_{2n+1},\dots)\in\IDetm_H
        \quad\text{for}\quad
        c_{2n-2k-2}^*> b_{2n-2k-2}
        .
    \]
    By induction, the theorem is therefore true.
\end{proof}

\begin{proof}[Proof of Theorem~\ref{th:strong}]
    The statement of this theorem only uses the ``distance'' between those sequences that differ
    in a finite number of entries. Since all norms in~$\mathbb{R}^n$ are equivalent, it is
    enough to prove the theorem for the~$l_\infty$ norm, i.e. the supremum of absolute values of
    the entries.
    
    Let~$n+1$ be the number of leading entries in an indeterminate Hamburger moment
    sequence~$\mh$, that vary. To prove by induction on~$n$, consider firstly the base
    case~$n=0$. Since~$\mh\in\IDetm_H$, there exists the corresponding N-extremal
    measure~$d\mu(x)$ with an atom~$\varepsilon_0\delta(x)\,dx$ at the
    origin,~$\varepsilon_0>0$. Therefore,~$d\mu(x)+\gamma\delta(x)\,dx$ is a positive measure
    with the moments~$c_0+\gamma, c_1,c_2,\dots$ provided
    that~$-\varepsilon_0<\gamma<\varepsilon_0$. As a result,~$(f_0,c_1,c_2,\dots)\in\IDetm_H$
    if~$|f_0-c_0|<\varepsilon_0$.

    Now assume that the following property is satisfied for~$n-1\ge 0$:
    \[
        \max_{0\le i<n}|f_i-c_i|<\varepsilon_{n-1}
        \implies
        (f_0,f_1,\dots,f_{n-1},c_{n},c_{n+1},\dots)\in\IDetm_H.
    \]
    For the inductive step, we need to show that there exists a number~$\varepsilon_{n}>0$ such
    that
    \begin{equation}\label{eq:3}
        \max_{0\le i<n+1}|f_i-c_i|<\varepsilon_{n}
        \implies
        (f_0,f_1,\dots,f_{n-1},f_{n},c_{n+1},c_{n+2},\dots)\in\IDetm_H.
    \end{equation}
    
    Recall that~$\mh\in\IDetm_H$. For certain small enough~$\varepsilon>0$, if we
    take~$d_n\coloneqq c_n+\varepsilon$ and~$e_n\coloneqq c_n-\varepsilon$, then by
    Theorem~\ref{th:3} there exist real numbers~$d_0,d_1,\dots,d_{n-1}$
    and~$e_0,e_1,\dots,e_{n-1}$ such that both
    \[
        \mathfrak{d}\coloneqq
        (d_0,d_1,\dots,d_n,c_{n+1},c_{n+2},\dots)
        \quad\text{and}\quad
        \mathfrak{e}\coloneqq
        (e_0,e_1,\dots,e_n,c_{n+1},c_{n+2},\dots)
    \]
    are indeterminate Hamburger moment sequences. Put
    \[
        \vartheta\coloneqq \frac{\varepsilon_{n-1}/2}
          {\max\limits_{0\le i< n}\max\big\{|d_i-c_i|,|e_i-c_i|,1\big\}}
        \quad\text{and}\quad
        \varepsilon_n\coloneqq
          \min\left\{\frac{\varepsilon_{n-1}}4,\frac{\varepsilon}2\right\}
    \]
    and suppose that the real numbers~$f_0,f_1,\dots,f_n$
    satisfy~$\max_{0\le i<n+1}|f_i-c_i|<\varepsilon_{n}$.
    
    Assume that~$f_n<c_n$. Then the number
    \[
    \alpha\coloneqq \frac{c_n- f_n}{\vartheta (c_n- e_n)} <
    \frac{\varepsilon/2}{\vartheta\varepsilon} \in\Big[0,\frac 12\Big)
    \]
    is such that
    \[
        f_n
        = c_n - \alpha\vartheta (c_n- e_n)
        =\alpha\vartheta e_n+(1-\alpha\vartheta) c_n
        .
    \]
    At the same time, the absolute value
    of~$\zeta_i \coloneqq f_i- c_i + \alpha\vartheta (c_i- e_i)$ for~$i<n$ satisfies
    \[
        |\zeta_i| \le |f_i- c_i| + \alpha\vartheta |c_i- e_i|
        < \varepsilon_n + \frac{1}2\alpha\varepsilon_{n-1}
        < \frac{1}4 \varepsilon_{n-1} + \frac{1}4 \varepsilon_{n-1}
        =\frac{1}2\varepsilon_{n-1}.
    \]
    Consequently,  for~$i<n$
    \[
        \begin{aligned}
        f_i
        &= c_i - \alpha\vartheta (c_i- e_i) + f_i- c_i + \alpha\vartheta (c_i- e_i)
        \\
        &= \alpha\vartheta e_i + (1- \alpha\vartheta) c_i + \zeta_i
        = \alpha\vartheta e_i + (1- \alpha\vartheta)
            \left(c_i + \frac{\zeta_i}{1- \alpha\vartheta}\right)
            .
        \end{aligned}
    \]
    Since
    \[
        \left|\frac{\zeta_i}{1- \alpha\vartheta}\right|
        <2\cdot\frac{1}2 \varepsilon_{n-1}
        =
        \varepsilon_{n-1},
    \]
    the sequence
    \[
    \mathfrak g\coloneqq \left(c_0 + \frac{\zeta_0}{1- \alpha\vartheta},%
        c_1 + \frac{\zeta_1}{1-\alpha\vartheta},\dots,%
        c_{n-1} + \frac{\zeta_{n-1}}{1-\alpha\vartheta},%
        c_n,c_{n+1},\dots\right)
    \]
    in an indeterminate Hamburger moment sequence by the induction hypothesis. In other words,
    we obtainded the representation
    \[
        (f_0,f_1,\dots,f_{n-1},f_{n},c_{n+1},c_{n+2},\dots)
        = \alpha\vartheta\mathfrak{e} + (1-\alpha\vartheta)\mathfrak{g},
    \]
    where both terms on the right-hand side, and hence the left-hand side, belong
    to~$\IDetm_H$. This representation yields~\eqref{eq:3} in the case~$f_n<c_n$. The case
    when~$f_n>c_n$ follows analogously after replacing~$\mathfrak{e}$ by~$\mathfrak{d}$, so the
    theorem is therefore true for Hamburger moment sequences. The assertion on Stieltjes moment
    sequences follow from considering the related symmetric Hamburger sequence.
\end{proof}

\section{Proof of Theorem~\ref{th:strong2}}\label{sec:proofs2}
\begin{lemma}\label{lm:strong2a}
    Suppose that~$\mh=(c_0,c_1,c_2,\dots)$ is a determinate Hamburger moment sequence and that a
    real number~$\varepsilon>0$ and an integer~$m>0$ are such that
    \[
    \mt\coloneqq (c_0,\dots,c_{m-1},c_{m}^*,c_{m+1}\dots)
    \]
    is a Hamburger moment sequence for all~$c_{m}^*$ satisfying~$|c_{m}-c_{m}^*|<\varepsilon$.
    Then~\mt\ is also determinate and~$\ind_0(\mt) = \ind_0(\mh)<\frac m2$.
\end{lemma}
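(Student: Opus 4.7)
The plan is to establish the three claims in sequence, leveraging the \emph{two-sided} nature of the hypothesis: if $c_m^*$ lies in $I\coloneqq(c_m-\varepsilon,c_m+\varepsilon)$, then so does the reflected value $2c_m-c_m^*$. Writing $\mt_s\coloneqq(c_0,\ldots,c_{m-1},s,c_{m+1},\ldots)$ for convenience, each $\mt_s$ with $s\in I$ is a Hamburger moment sequence by hypothesis, with $\mt_{c_m}=\mh$ and $\mt_{c_m^*}=\mt$. Two ingredients from earlier sections will do the heavy lifting: the fact recorded immediately before Lemma~\ref{lm:3} that any convex combination involving an indeterminate summand is itself indeterminate, together with Lemma~\ref{lm:3} and Theorem~\ref{th:2}.

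Determinacy of $\mt$ is the easy step. If $\mt_{c_m^*}$ with $c_m^*\ne c_m$ were in $\IDetm_H$, the mirror $\mt_{2c_m-c_m^*}$ would also lie in our family, and averaging entry by entry gives $\mh=\tfrac12\mt_{c_m^*}+\tfrac12\mt_{2c_m-c_m^*}$; the quoted convex-combination fact would then force $\mh\in\IDetm_H$, a contradiction. The equality of indices follows by the same mechanism but one step finer: given any $s_0,s_1\in I$, pick $\eta\in(0,1)$ small enough that $s_2\coloneqq(s_0-\eta s_1)/(1-\eta)$ remains in $I$---possible since $s_2\to s_0$ as $\eta\to 0^+$. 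Then $\mt_{s_0}=\eta\,\mt_{s_1}+(1-\eta)\,\mt_{s_2}$ is a convex combination of three sequences now known to be determinate, so Lemma~\ref{lm:3} yields $\ind_0(\mt_{s_0})\le\ind_0(\mt_{s_1})$; swapping $s_0\leftrightarrow s_1$ gives the reverse inequality, hence $\ind_0(\mt_s)$ is constant on $I$, and in particular $\ind_0(\mt)=\ind_0(\mh)$.

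The strict bound $\ind_0(\mh)<m/2$ is the substantive step. Let $\ell\coloneqq\ind_0(\mh)$ and assume for contradiction that $2\ell\ge m$. If $2\ell\ge m+1$, Theorem~\ref{th:2} applied with $n=\ell$ to $\mh$ and $\mt$ (which agree from position $2\ell$ onward) pins $c_1^*=c_1,\dots,c_{2\ell-1}^*=c_{2\ell-1}$; since $m\in\{1,\dots,2\ell-1\}$, this contradicts the freedom of $c_m^*$ in $I$. The remaining possibility, $2\ell=m$ (necessarily with $m$ even, $m\ge 2$), is the delicate case. There $(c_i)_{i=m}^\infty$ is determinate with unique measure $x^m\,d\mu$; picking $c_m^*<c_m$ in $I$ and denoting by $d\nu$ the measure of $\mt$, the positive measure $x^m\,d\nu+(c_m-c_m^*)\,\delta_0$ has moment sequence $c_m,c_{m+1},c_{m+2},\dots$, so by determinacy it must coincide with $x^m\,d\mu$. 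But for $m\ge 1$ neither $x^m\,d\mu$ nor $x^m\,d\nu$ carries any mass at the origin, while the left-hand side clearly places mass $c_m-c_m^*>0$ there---a contradiction.

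The main obstacle is precisely this borderline case $2\ell=m$: Theorem~\ref{th:2} does not control $c_{2\ell}$ by itself, so the rigidity must be extracted from the measure-level observation that $x^{2\ell}\,d\mu$ has no atom at the origin. This is exactly the asymmetric behaviour of $c_{2m}$ (``can only be increased'') noted in the discussion following Theorem~\ref{th:strong2}, and it is the only point in the argument where the two-sided nature of the perturbations is essential rather than a convenience.
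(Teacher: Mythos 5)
Your proof is correct, and while it rests on the same two basic ingredients as the paper's --- mirror convex combinations of the perturbed sequences and Theorem~\ref{th:2} --- it organises the two harder steps differently, and both of your variants are sound. For the equality of indices, the paper also works with the reflected value $2c_m^*-c_m$, but since that reflection only stays in the admissible range when $|c_m^*-c_m|<\varepsilon/2$, it has to bootstrap over subintervals of radius $\tfrac12\varepsilon,\tfrac34\varepsilon,\dots$ to reach all of $(c_m-\varepsilon,c_m+\varepsilon)$; your decomposition $\mt_{s_0}=\eta\,\mt_{s_1}+(1-\eta)\,\mt_{s_2}$ with $s_2$ chosen close to $s_0$ sidesteps the iteration entirely and gives constancy of the index on the whole interval in one stroke via Lemma~\ref{lm:3}. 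For the strict bound, the paper excludes the borderline case $2\ind_0=m$ (with $m$ even) by arguing that a trimmed sequence which is determinate of index zero has a measure vanishing near the origin (a consequence of Lemma~\ref{lm:2}), so its leading moment cannot be decreased; your argument exploits the same obstruction but carries it out directly at the level of measures --- $x^m\,d\nu+(c_m-c_m^*)\,\delta_0$ would have to coincide with $x^m\,d\mu$, which carries no mass at the origin --- and thus needs neither Lemma~\ref{lm:2} nor any index-zero structure, only determinacy of the trimmed sequence and the evenness of $m$ (which you correctly note is automatic in this case, so that $x^m\,d\nu$ is positive). A further small economy: you establish $\ind_0(\mt)=\ind_0(\mh)$ before the bound, so strictness needs to be verified only for $\mh$, whereas the paper proves $2r<m$ and $2q<m$ separately and then sandwiches $r=q$ with two convex-combination arguments.
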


\begin{proof}
    Denote~$r\coloneqq\ind_0(\mh)$ and~$q=q(c_{m}^*)\coloneqq\ind_0(\mt)$. On the one hand,
    Theorem~\ref{th:2} implies that~$2r \le m$ and~$2q \le m$. If~$m$ is odd, both inequalities
    are clearly strict. Let us show that they are also strict on the assumption that~$m$ is
    even. Indeed, for each fixed~$c_{m}^*\in(c_{m}-\varepsilon,c_{m}+\varepsilon)$, the trimmed
    sequence
    \[
        \mh_m\coloneqq (c_{m}^*,c_{m+1},c_{m+2},\dots)
    \]
    is either indeterminate or determinate of index~$0$ by the definition of index. The latter
    condition for a Hamburger sequence implies that the corresponding measure vanishes near the
    origin and adding a point mass at the origin makes it indeterminate (a straightforward
    consequence of e.g. Lemma~\ref{lm:2}). In other words, if~$\mh_m\in\Detm_H$ and its index is
    zero, then
    \[
        (c_{m}^*-\widetilde\varepsilon,c_{m+1},c_{m+2},\dots)
    \]
    cannot be a Hamburger sequence when~$\widetilde\varepsilon>0$. This however fails to be true
    for any~$\widetilde\varepsilon<\varepsilon-(c_{m}-c_{m}^*)$; therefore,~$\mh_m\in\IDetm_H$
    as soon as~$|c_{m}-c_{m}^*|<\varepsilon$, which means~$2r < m$ and~$2q(c_{m}^*) < m$ for
    even~$m$.

    On the other hand, the trimmed sequence~$(c_{i})_{i=2r}^\infty$ can be expressed as the
    following convex combination of two sequences:
    \begin{align*}
      (c_{2r},\dots,c_{m-1},c_{m},c_{m+1}\dots)
      ={}& \frac 12 (c_{2r},\dots,c_{m-1},c_{m}^*,c_{m+1}\dots)\\
         &+ \frac 12 (c_{2r},\dots,c_{m-1},2c_{m}-c_{m}^*,c_{m+1},\dots)
        .
    \end{align*}
    Both sequences on the right-hand side are the Hamburger sequences provided that
    \[
    \big|c_{m}-(2c_{m}-c_{m}^*)\big|=\big|c_{m}-c_{m}^*\big|<\varepsilon,
    \]
    so they are determinate, because their sum~$(c_{i})_{i=2r}^\infty$ is determinate --- see
    the explanation before Lemma~\ref{lm:3}. Consequently,~$r\le q(c_{m}^*)$.

    Now, for each~$c_{m}^*\in(c_{m}-\frac \varepsilon 2,c_{m}+\frac \varepsilon 2)$ the
    left-hand side of
    \[
        \begin{aligned}
            2(c_{2q},\dots,c_{m-1},c_{m}^{*},c_{m+1}\dots)
            ={} & (c_{2q},\dots,c_{m-1},c_{m},c_{m+1}\dots)
            \\
            &+ (c_{2q},\dots,c_{m-1},2c_{m}^{*}\!-c_{m},c_{2m+1},\dots)
        \end{aligned}
    \]
    is a determinate moment sequence. So, both sequences on its right-hand side must be
    determinate, and hence~$q(c_{m}^*)\le r$. Analogously, for
    each~$c_{m}^{**}\in(c_{m}^*-\frac \varepsilon 4,c_{m}^*+\frac \varepsilon 4)$ the identity
    \[
        \begin{aligned}
        2(c_{2q},\dots,c_{m-1},c_{m}^{**},c_{m+1},\dots)
        ={}&(c_{2q},\dots,c_{m-1},c_{m}^*,c_{m+1},\dots)\\
        &+ (c_{2q},\dots,c_{m-1},2c_{m}^{**}\!-c_{m}^*,c_{m+1},\dots)
        \end{aligned}
    \]
    shows that both sequences on its right-hand side must be determinate, and
    hence~$q(c_{m}^{**})\le q(c_{m}^{*})\le r$. In other words, we have~$q(c_{m}^{*})\le r$
    whenever~$c_{m}^{*}\in(c_{m}-\frac 34\varepsilon,c_{m}+\frac 34\varepsilon)$. The same
    manipulations can be continued further, which yields the inequality~$q(c_{m}^{*})\le r$ for
    all~$c_{m}^{*}\in(c_{m}-\varepsilon,c_{m}+\varepsilon)$. It implies that~$q=q(c_{m}^{*})= r$
    being combined with the reverse inequality~$r\le q(c_{m}^*)$ obtained above.
\end{proof}

\begin{lemma}\label{lm:strong2b}
    No Hamburger moment sequence~$\mh\in\Detm_H$ with~$\ind_0(\mh)=0$ can satisfy the conditions
    of Lemma~\ref{lm:strong2a}.
\end{lemma}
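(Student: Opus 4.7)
The plan is to assume for contradiction that there exist a positive integer $m$ and an $\varepsilon>0$ such that $\mt\coloneqq(c_{0},\dots,c_{m-1},c_{m}^{*},c_{m+1},\dots)$ is a Hamburger moment sequence for every $c_{m}^{*}$ with $|c_{m}-c_{m}^{*}|<\varepsilon$. Then Lemma~\ref{lm:strong2a} forces every such $\mt$ to lie in $\Detm_{H}$ with $\ind_{0}(\mt)=0$; in particular, its trimmed tail $(c_{2},\dots,c_{m-1},c_{m}^{*},c_{m+1},\dots)$ belongs to $\IDetm_{H}$. Applying Lemma~\ref{lm:2} to this indeterminate tail shows that the determinateness of $\mt$ is equivalent to $(c_{1},c_{0})$ lying on the boundary of the parabolic region from~\eqref{eq:4}; denoting that boundary quadratic by $\phi(c_{1};c_{m}^{*})$, the hypothesis collapses to
\[
c_{0}=\phi(c_{1};c_{m}^{*})\qquad\text{for every }c_{m}^{*}\in(c_{m}-\varepsilon,c_{m}+\varepsilon).
\]

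For $m=1$, the tail $(c_{2},c_{3},\dots)$ is untouched by the perturbation, so $\phi$ is a fixed quadratic in $c_{1}^{*}$ whose leading coefficient $\sum_{k\ge0}P_{k}^{2}(0)$ is bounded below by $P_{0}^{2}(0)=1/c_{2}>0$. A non-degenerate strictly convex parabola cannot be identically equal to the constant $c_{0}$ on an open interval, so this case is settled at once.

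For $m\ge2$, the next step is to reinterpret $\phi(c_{1};c_{m}^{*})$ as the infimum of $\int d\mu$ over positive Borel measures $\mu$ on $\mathbb{R}$ satisfying $\int x\,d\mu=c_{1}$, $\int x^{i}\,d\mu=c_{i}$ for all $2\le i\ne m$, and $\int x^{m}\,d\mu=c_{m}^{*}$. This infimum is attained by the N-extremal measure of the indeterminate tail at Nevanlinna parameter $-c_{1}$, and as the infimum of a linear functional over a convex family of measures, $c_{m}^{*}\mapsto\phi(c_{1};c_{m}^{*})$ is convex. The hypothetical constancy on the open interval, together with the uniqueness of the minimizer $\mu_{c_{m}^{*}}$ guaranteed by the determinateness of $\mt$, would then force the affine identity $\mu_{c_{m}^{**}}=\tfrac{1}{2}(\mu_{c_{m}^{*}}+\mu_{\widetilde{c}_{m}^{*}})$ whenever $c_{m}^{**}=\tfrac{1}{2}(c_{m}^{*}+\widetilde{c}_{m}^{*})$. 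The contradiction will come from the fact that each $\mu_{c_{m}^{*}}$ is N-extremal, its atoms being the zeros of $b(x)+c_{1}d(x)$ built from the Nevanlinna functions $b,d$ of the associated tail: the union of two distinct N-extremal zero sets has strictly larger zero-counting density than the single Nevanlinna zero set attached to the interpolated tail, ruling out the affine combination above.

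The principal obstacle is this final zero-density comparison for $m\ge2$. A plausible backup route, to be tried first, is to iterate Lemma~\ref{lm:2} on the further trimmed tails $(c_{2n},c_{2n+1},\dots)$, each of which is indeterminate by Lemma~\ref{lm:1}: the equation $c_{0}=\phi(c_{1};c_{m}^{*})$ then unfolds into a nested system of parabolic boundary relations whose joint constancy in $c_{m}^{*}$ propagates to one of the inner quadratics, reducing the situation to the strictly convex quadratic scenario already handled in the $m=1$ case.
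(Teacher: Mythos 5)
Your reduction is sound as far as it goes: by Lemma~\ref{lm:strong2a} every perturbed sequence is determinate of index~$0$, its tail from position~$2$ is indeterminate, and Lemma~\ref{lm:2} (together with the necessity of~\eqref{eq:4}, which the paper uses but states only implicitly) turns determinacy of~$\mt$ into the boundary relation $c_0=\phi(c_1;c_m^*)$. For $m=1$ this finishes the proof exactly as you say, since $\phi$ is then a fixed quadratic in the perturbed entry with leading coefficient $\sum_k P_k^2(0)\ge 1/c_2>0$, and such a quadratic cannot be constant on an interval. Incidentally, the paper's remark after Lemma~\ref{lm:2} notes that its condition $p_*(\gamma)\ge 0$ for $m=1$ is precisely the analogue of~\eqref{eq:4}, so for $m=1$ your route and the paper's essentially coincide.

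The genuine gap is the whole case $m\ge 2$, which is where the lemma actually has content. Your convexity argument only shows that $c_m^*\mapsto\phi(c_1;c_m^*)$ is convex and that, under the hypothetical constancy, the unique minimizing measures would satisfy $\mu_{c_m^{**}}=\tfrac12(\mu_{c_m^*}+\mu_{\widetilde c_m^*})$; this is not yet contradictory, because constancy is perfectly compatible with convexity and because these measures are N-extremal for \emph{different} moment problems (their $m$-th moments differ), so none of the standard rigidity facts about distinct N-extremal solutions of one problem (disjoint interlacing supports, etc.) applies. The asserted ``zero-counting density'' comparison between the union of two such supports and the support attached to the interpolated tail is exactly the missing step, and you acknowledge it is unproven; the backup route is no better, since the coefficients of the ``inner'' quadratics obtained by iterating Lemma~\ref{lm:2} depend on $c_m^*$ through the orthonormal and second-kind polynomials of the perturbed tails, so constancy in $c_m^*$ does not visibly ``propagate'' to any single strictly convex quadratic. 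The paper avoids all of this and treats every $m\ge 1$ at once by a purely determinantal argument: Sylvester's identity writes $\Delta_n^{(0)}[\mt]$ through $\det F^{(m+1)}_{n-m-1}[\mt]$, whose entries depend affinely on $\gamma=c_m^*-c_m$; Hadamard-type bounds make the normalized determinant a polynomial $p_n(\gamma)$ of degree $m+1$ with leading coefficient $(-1)^{m(m+1)/2}$ and coefficients bounded uniformly in $n$; a coefficientwise limit $p_*\not\equiv 0$ exists along a subsequence, while Hamburger's criterion for index~$0$, applied to every perturbed sequence via Lemma~\ref{lm:strong2a}, forces $p_*(\gamma)=0$ on the whole interval --- a contradiction. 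Without a proof of your density comparison (or some substitute of comparable strength), your argument establishes the lemma only for $m=1$.
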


The proof of this lemma is based on relations between certain determinants of the corresponding
Hankel matrix. More specifically, given a sequence~$\mathfrak e=(e_0,e_1,e_2,\dots)$ and two
integer numbers~$n>0$ and~$k\ge 0$ denote
\[
\Delta_{n}^{(k)}[\mathfrak e]\coloneqq
\begin{vmatrix}
    e_{2k}&e_{2k+1}&e_{2k+2}&\hdots&e_{2k+n-1}\\
    e_{2k+1}&e_{2k+2}&e_{2k+3}&\hdots&e_{2k+n}\\
    e_{2k+2}&e_{2k+3}&e_{2k+4}&\hdots&e_{2k+n}\\
    \vdots&\vdots&\vdots&\ddots&\vdots\\
    e_{2k+n-1}&e_{2k+n}&e_{2k+n+1}&\hdots&e_{2k+2n-2}
\end{vmatrix}
.
\]
If additionally~$0\le i<k$ and~$0\le j<k$, then
\[
f_{i,j}^{(k)}[\mathfrak e;n]\coloneqq
\begin{vmatrix}
    e_{i+j}&e_{i+k}&e_{i+k+1}&\hdots&e_{i+k+n-1}\\
    e_{j+k}&e_{2k}&e_{2k+1}&\hdots&e_{2k+n-1}\\
    e_{j+k+1}&e_{2k+1}&e_{2k+2}&\hdots&e_{2k+n}\\
    \vdots&\vdots&\vdots&\ddots&\vdots\\
    e_{j+k+n-1}&e_{2k+n-1}&e_{2k+n}&\hdots&e_{2k+2n-2}
\end{vmatrix}
=
\begin{vmatrix}
    e_{i+j}&e_{i+k}&\hdots&e_{i+k+n-1}\\
    e_{j+k}&&&\\
    \vdots&&&\\
    e_{j+k+n-1}&\multicolumn{3}{c}{\smash{\raisebox{\normalbaselineskip}
            {\Large$\Delta_{n}^{(k)}[\mathfrak e]$}\quad}}
\end{vmatrix}
.
\]
Since determinants are invariant under the transposition, the
matrix
\[
F_n^{(k)}[\mathfrak e]\coloneqq \big(f_{i,j}^{(k)}[\mathfrak e;n]\big)_{i,j=0}^{k-1}
\]
is symmetric.

\begin{proof}[Proof of Lemma~\ref{lm:strong2b}]
    Put~$\gamma\coloneqq c_{m}^*-c_{m}$ so that~$c_{m}^*=c_{m}+\gamma$. Denote also
    \[
    f_{i,j}[\mt] \coloneqq f_{i,j}^{(m+1)}[\mt;n-m-1]
    \]
    for the sake of brevity. The quadratic form
    \begin{equation}\label{eq:quadr_form_gamma}
        \sum_{i,j=0}^{\infty}c_{i+j}x^ix^j + \gamma\sum_{k=0}^{m}x^kx^{m-k}
    \end{equation}
    corresponding to~\mt\ is positive definite, see Remark~\ref{rem:defin-basic-facts};
    hence,~$\Delta_{n-m-1}^{(m+1)}[\mt]=\Delta_{n-m-1}^{(m+1)}[\mh]>0$. Sylvester's determinant
    identity~\cite[Chapter~I, \S 2]{GantmacherKrein} for the minors introduced above can be
    written as
    \[
    \Delta_{n}^{(0)}[\mt]
    =
    \left(\Delta_{n-m-1}^{(m+1)}[\mt]\right)^{-m}
    \cdot
    \det F_{n-m-1}^{(m+1)}[\mt]
    .
    \]
    On the right-hand side, only the entries \( f_{k,m-k}[\mt] \) with~$k=0,1,\dots,m$ depend
    on~$c_m^*$. More specifically,
    \[
        \begin{aligned}
            f_{k,m-k}[\mt]={}&
            \begin{vmatrix}
                c_{m}&c_{k+m+1}&\hdots&c_{k+n-1}\\
                c_{2m-k+1}&&&\\
                \vdots&&&\\
                c_{m-k+n-1}&\multicolumn{3}{c}{\smash{\raisebox{\normalbaselineskip}
                        {\Large$\Delta_{n-m-1}^{(m+1)}[\mh]$}\quad}}
            \end{vmatrix}
            +
            \begin{vmatrix}
                \gamma&0&\hdots&0\\
                c_{2m-k+1}&&&\\
                \vdots&&&\\
                c_{m-k+n-1}&\multicolumn{3}{c}{\smash{\raisebox{\normalbaselineskip}
                        {\Large$\Delta_{n-m-1}^{(m+1)}[\mh]$}\quad}} 
            \end{vmatrix}
            \\
            ={}& f_{k,m-k}[\mh] + \gamma\,\Delta_{n-m-1}^{(m+1)}[\mh]
            .
        \end{aligned}
    \]
    As a consequence,
    \[
    F_{n-m-1}^{(m+1)}[\mt]
    =
    \left( f_{i,j}[\mt] \right)_{i,j=0}^m
    =
    \left( f_{i,j}[\mh]
        + \gamma \delta_{i+j,m} \,\Delta_{n-m-1}^{(m+1)}[\mh] \right)_{i,j=0}^m
    ,
    \]
    where
    \[
        \delta_{i+j,m} =
        \begin{cases}
            0,&\text{ if } i+j\ne m;\\
            1,&\text{ if } i+j = m
        \end{cases}
    \]
    is the Kronecker delta. The Hadamard inequality~\cite[Chapter~I, \S 8]{GantmacherKrein}
    implies
    \begin{equation}\label{eq:Hadamard_ineq}
        f_{i,i}[\mh]\le c_{2i}\cdot \Delta_{n-m-1}^{(m+1)}[\mh]
        \quad\text{for}\quad
        i=0,1,\dots,m.
    \end{equation}
    All factors here are strictly positive as the principal minors of the Hankel matrix
    corresponding to~\eqref{eq:quadr_form_gamma} with~$\gamma=0$.
    Moreover,
    \[
        \begin{vmatrix}
            f_{i,i}[\mh]&f_{i,j}[\mh]\\[7pt]
            f_{j,i}[\mh]&f_{j,j}[\mh]
        \end{vmatrix}
        =
        \Delta_{n-m-1}^{(m+1)}[\mh]\cdot
        \begin{vmatrix}
            c_{2i}&c_{i+j}&c_{m+i}&\dots&c_{i+n-1}\\
            c_{i+j}&c_{2j}&c_{m+j}&\dots&c_{j+n-1}\\
            c_{m+i}&c_{m+j}&&&\\
            \vdots&\vdots&&&\\
            c_{i+n-1}&c_{j+n-1}&\multicolumn{3}{c}{\smash{\raisebox{\normalbaselineskip}
                    {\Large$\Delta_{n-m-1}^{(m+1)}[\mh]$}\quad}}
        \end{vmatrix}
        >0
    \]
    for any~$0\le i<j<m$ by Sylvester's determinant identity. Since the
    matrix~$F_{n-m-1}^{(m+1)}[\mh]$ is symmetric, the last inequality can be rewritten with the
    help of~\eqref{eq:Hadamard_ineq} as
    \[
    \left(f_{i,j}[\mh]\right)^2
    <
    f_{i,i}[\mh]\cdot f_{j,j}[\mh]
    \le
    c_{2i}c_{2j}\left(\Delta_{n-m-1}^{(m+1)}[\mh]\right)^2\!.
    \]
    Consequently, all entries of the matrix
    \[
    \left(
        \frac{ \vphantom{\Big|}
            f_{i,j}[\mt]
        }{ \vphantom{\Big|}
            \Delta_{n-m-1}^{(m+1)}[\mt]}
    \right)_{i,j=0}^{m}
    =
    \left(
        \frac{ \vphantom{\Big|}
            f_{i,j}[\mh]
        }{ \vphantom{\Big|}
            \Delta_{n-m-1}^{(m+1)}[\mh]}
        + \gamma \delta_{i+j,m}
    \right)_{i,j=0}^{m}
    \]
    are bounded in absolute value uniformly in~$n$ and~$\gamma\in(-\varepsilon,\varepsilon)$.
    Moreover, its determinant
    \[
    \begin{aligned}
        p_n(\gamma)
        \coloneqq{}&
        \frac{\det F_{n-m-1}^{(m+1)}[\mt]}{\left(\Delta_{n-m-1}^{(m+1)}[\mt]\right)^{m+1}}
        =\det
        \left(
            \frac{ \vphantom{\Big|}
                f_{i,j}[\mh]
            }{ \vphantom{\Big|}
                \Delta_{n-m-1}^{(m+1)}[\mh]}
            + \gamma \delta_{i+j,m}
        \right)_{i,j=0}^{m}
        \\
        ={}&(-1)^{\frac{m(m+1)}2}\gamma^{m+1}+\dots
        + \frac{\det F_{n-m-1}^{(m+1)}[\mh]}{\left(\Delta_{n-m-1}^{(m+1)}[\mh]\right)^{m+1}}\gamma^0
        .
    \end{aligned}
    \]
    is a polynomial in~$\gamma$ of degree~$m+1$. Its coefficients remain bounded uniformly
    in~$n$, as certain sums of products of at most~$m+1$ bounded factors.

    The~$m+2$-dimensional bounded set of coefficients of~$p_n(\gamma)$ is necessarily compact.
    Therefore, there is a sequence~$(n_k)_{k=1}^{\infty}$ such that the
    polynomials~$p_{n_k}(\gamma)$ converge coefficientwise to
    \[
    p_{*}(\gamma) =(-1)^{\frac{m(m+1)}2}\gamma^{m+1}+\dots.
    \]
    In particular,~$p_{*}(\gamma)\not\equiv 0$ due to the constant leading coefficient. The last
    condition however cannot be satisfied. Indeed, Lemma~\ref{lm:strong2a} yields
    that~\( \ind_0(\mt)=\ind_0(\mh)=0\). By Hamburger's
    criterion~\cite[pp.~183--185]{HamburgerT3}, this is equivalent to
    \[
        \lim_{n\to\infty}\frac{\Delta_{n}^{(0)}[\mt]}{\Delta_{n-1}^{(1)}[\mt]} = 0
        \quad\text{and}\quad
        \lim_{n\to\infty}\frac{\Delta_{n-k}^{(k)}[\mt]}{\Delta_{n-k-1}^{(k+1)}[\mt]}
        = \xi_k(\gamma),
        \quad
        0<\xi_k(\gamma)<\infty,
        \quad
        k=1,2,\dots,
    \]
    for some functions~$\xi_k(\gamma)$ and all~$\gamma\in(-\varepsilon,\varepsilon)$. The ratios
    are well defined, because the inequality~$\Delta_{n-k}^{(k)}[\mt]>0$ follows from positive
    definiteness of the quadratic form~\eqref{eq:quadr_form_gamma}. Consequently,
    \[
        0=\lim_{n\to\infty}
        \frac{\Delta_{n}^{(0)}[\mt]}{\Delta_{n-1}^{(1)}[\mt]}
        =
        \lim_{n\to\infty}
        \frac{\det F_{n-m-1}^{(m+1)}[\mt]}{\left(\Delta_{n-m-1}^{(m+1)}[\mt]\right)^{m}\Delta_{n-1}^{(1)}[\mt]}
        =
        \frac{1}{\prod_{k=1}^{m}\xi_k(\gamma)}\lim_{n\to\infty}
        \frac{\det F_{n-m-1}^{(m+1)}[\mt]}{\left(\Delta_{n-m-1}^{(m+1)}[\mt]\right)^{m+1}}
        ,
    \]
    that is~$p_{*}(\gamma)= 0$ for all~$\gamma\in(-\varepsilon,\varepsilon)$ and,
    thus,~$p_{*}(\gamma)\equiv 0$. This contradiction proves the lemma.
\end{proof}

\begin{proof}[Proof of Theorem~\ref{th:strong2}]
    The theorem's hypothesis for the Hamburger sequences reads: there exist some~$\varepsilon>0$
    and~$m\ge 0$ such that
    \[
    (c_0,\dots,c_{m-1},c_{m}^*,c_{m+1}\dots)
    \]
    is a Hamburger moment sequence for all~$c_{m}^*$ satisfying~$|c_{m}-c_{m}^*|<\varepsilon$.
    The case~$m=0$ follows immediately from Hamburger's criterion of determinacy, so we
    assume~$m>0$.

    Let~$\mh=(c_0,c_1,c_2,\dots)$ be determinate, then Lemma~\ref{lm:strong2a} implies
    that~$r\coloneqq \ind_0(\mh)<\frac m2$. Therefore, the trimmed sequence
    \[
    (c_{2r},\dots,c_{m-1},c_{m},c_{m+1},\dots)
    \]
    is determinate with the zero index by definition of the index. The last assertion however
    contradicts Lemma~\ref{lm:strong2b}, so~\mh\ cannot be determinate.

    Now, let~$\varepsilon>0$ and let~$(s_0,\dots,s_{m-1},s_{m}^*,s_{m+1}\dots)$ be a Stieltjes
    moment sequence for all~$s_{m}^*$ satisfying~$|s_{m}-s_{m}^*|<\varepsilon$.
    Then
    \[
    (s_0,0,\dots,s_{m-1},0,s_{m}^*,0,s_{m+1},0\dots)
    \]
    is a symmetric Hamburger moment sequence; it is indeterminate by the first part of
    Theorem~\ref{th:strong2} which is already proved. As a result, the correspondence between
    Stieltjes and symmetric moment sequences yields
    \[
    (s_0,\dots,s_{m-1},s_{m},s_{m+1}\dots)\in\IDetm_S
    .
    \]
\end{proof}

\end{document}